\newcommand{\R}{\mathbb{R}}
\newcommand{\N}{\mathbb{N}}
\newcommand{\bb}[1]{\mathbf{#1}}
\newcommand{\set}[1]{\left\lbrace #1  \right\rbrace}
\newcommand{\abs}[1]{\left| #1 \right|}
\newcommand{\br}[1]{\left( #1 \right)}
\newcommand{\sr}[1]{\left[ #1 \right]}
\newcommand{\sumx}[3]{\sum\limits_{#2}^{#3}{\vphantom\sum}_{\!#1}}
\DeclareMathOperator*{\union}{\bigcup}
\DeclareMathOperator*{\intersect}{\bigcap}
\DeclareMathOperator{\closure}{cl}
\newtheorem[style=L,cut=false]{theorem}{Theorem}
\newtheorem[style=L,cut=false]{corollary}{Corollary}
\newtheorem[style=L,cut=false]{hypothesis}{Hypothesis}
\newtheorem[style=L,cut=false]{lemma}{Lemma}
\newtheorem[style=L,bodystyle=\textnormal\noindent,cut=false]{definition}{Definition}
\newtheorem[style=L,bodystyle=\textnormal\noindent,cut=true]{remark}{Remark}
\newtheorem[style=L,bodystyle=\textnormal\noindent,cut=true]{conclusion}{Conclusion}
\newtheorem[style=L,bodystyle=\textnormal\noindent,cut=false]{notation}{Notation}
\providecommand{\keywords}[1]
{
  \small	
  \textbf{\textit{Keywords---}} #1
}
\title{A Topological View on Integration and Exterior Calculus}
\author{Petal B. Mokryn}
\affil{Monash University, Wellington Rd, Clayton VIC 3800, Australia}
\date{\today}
\begin{document}
\maketitle
\begin{abstract}
    A construction of integration, function calculus, and exterior calculus is made, allowing for integration of unital magma valued functions against (compactified) unital magma valued measures over arbitrary topological spaces. The Riemann integral, geometric product integral, and Lebesgue integral are shown as special cases. Notions similar to chain complexes are developed to allow this form of integration to define notions of exterior derivative for differential forms, and of derivatives of functions as well. Resulting conclusions on integration, orientation, dimension, and differentiation are discussed. Applications include calculus on fractals, stochastic analysis, discrete analysis, and other novel forms of calculus. 
\end{abstract}

\keywords{Topology, Analysis, Exterior Calculus, Measure Theory, Fundamentals}

\tableofcontents
\section{Introduction}
Analysis is and has been for a long time one of the most prominent fields studied in mathematics. As such, there have been many, many attempts at generalizing it to various settings, both specific and general. Ranging over discrete calculus\cite{dimakis1994discrete,fiadh2023applications,desbrun2006discrete,desbrun2005discrete}, graph analysis\cite{brouwer2011spectra}, calculus and differential forms on fractals via approaches such as harmonic analysis\cite{strichartz2006differential,kigami2001analysis,kelleher2017differential,ionescu2012derivations}, the fractal-fractional calculus\cite{yang2012advanced,kempfle2002fractional} and others\cite{stillinger1977axiomatic,heinonen2007nonsmooth,harrison2015operator}, as well as other novel forms of analysis such as the p-adic calculus\cite{robert2013course,albeverio2009operator}, stochastic and rough path analysis\cite{coutin2002stochastic,cont2013functional,stepanov2021towards,ito2015rough}, generalizations of analysis to more abstract algebraic structures\cite{bertram2008differential,dimakis1994differential,sardanashvily2016differential,bertram2003differential,bertram2020functorial}, noncommutative geometry\cite{dimakis1993noncommutative}, and more, inclusing works that attempt to unify various forms of calculus under a single framework\cite{harrison2015operator,hairer2014theory,brudnyi2015differential}. This is yet another such attempt. Inspired by the realization remarked on by David\cite{david2020laplacian} that the usual harmonic analysis approach to fractal calculus is near purely topological, the works of Harrison\cite{harrison2015operator} trying to extend the standard exterior calculus to apply on novel topological spaces embedded in $\R^{n}$, and the following line of thought: \textit{"Since a topological space is enough information to define measurable sets, shouldn't it be enough information to define integration? And if it is, would that be enough to define notions of differentiation via the generalized Stokes theorem, taken as a defining axiom?"}\\
The answer, as will be shown in this paper, seems to be mostly \textit{yes}. Section \ref{A Topological Construction of Integration} of this paper will define integration over measurable subset of an arbitrary topological space, of unital magma\footnote{A unital magma is a set closed under a binary operation, such that the set includes an identity element of said binary operation.} valued functions, against measures valued in compactifications of a unital magma that satisfy certain conditions. A form of integration I call \textit{IA integration} will be developed, and I will demonstrate the Riemann integral, geometric product integral, and Lebesgue integral on the real number line to be special cases of it. Section \ref{A Topological Construction of Integrable Chain Complexes} will construct notions of orientation and order of operations suited to the possibly noncommutative notions of integration considered in this paper, resulting in a structure I call an \textit{"integrable chain complex"}. Integrable chain complexes will be reminiscent of traditional chain complexes, but suited specifically for the form of calculus this paper defines. Section \ref{The Exterior Calculus on Integrable Chain Complexes} will develop the resulting basic notions of exterior calculus, including \textit{differential forms}, the \textit{exterior derivative}, and an \textit{integrable cochain complex} dual to the integrable chain complex on a topological space. Section \ref{The Function Calculus on Integrable Chain Complexes} will use a similar construction on integrable chain complexes to define the \textit{derivatives} of functions defined on certain subsets of the base topological space, as determined by the integrable chain complex considered. This will lead to a type of differential operator on functions with notion of dimension, replacing "functions on a $k$-dimensional surface" with "functions on the (closure of base sets of) integrable $k$-chains". Of particular note will be resulting conclusions on the nature of integration, orientation, a notion of integer dimension induced by the integrable chain complex on a topological space, and the behavior of noncommutative integrals. Sadly, due to time constraints, I have not yet been able to explore questions of what are the conditions for existence and uniqueness of many structures defined in this paper, see \hyperref[further work]{further work} for details. Although some questions of existence and uniqueness were not dealt with, some conclusions were reached on what properties these objects must have when well-defined. This paper doesn't proclaim to be contain a fully developed theory, but rather the skeleton of one - it gives the outline to defining a form of integration, the definition of an integrable chain complex, a resulting basic form of exterior calculus, and the definition of a resulting differential operator. Choosing a specific form of IA integral, constructing an appropriate integrable chain complex, and fully deriving the possible resulting differential operators is left for future work, and likely requires a case-by-case treatment for different forms of analysis defined on different topological spaces. Future works may be on fleshing this skeleton of a theory out more rigorously, constructing further notions of exterior and function calculus in its general setting, applying said the theory developed here as a general guideline to defining specific choices of calculus on specific topological spaces, and more.

\section{A Topological Construction of Integration}\label{A Topological Construction of Integration}
\subsection{Topological unital magmas and measures on the Borel $\sigma$-algebra of a topological space}

\subsubsection{Topological unital magmas}
First, I will define the algebraic structure that will be key to defining functions and measures to be integrated in the first place.
\begin{definition}
    A \textbf{topological unital magma} is a unital magma $(M,+_{M})$ that has an identity element $0_{M}$, where $M$ is endowed with a Hausdorff topology in which the magma operation $+_{M}:M\times M\to M$ is a continuous map.
\end{definition}
\begin{definition}
    An \textbf{extended} topological unital magma $(\overline{M},+_{M})$ is any compactification of a topological unital magma that is Hausdorff, where the magma operation $+_{M}$ is extended to be defined on all of $\overline{M}\times M$ and all of $M\times \overline{M}$ such that the identity element $0_{M}$ remains an identity element over all of $\overline{M}$.
\end{definition}
Note, in this definition the magma operation $+_{M}$ doesn't have to be defined on $(\overline{M}\setminus M)\times (\overline{M}\setminus M)$.
\begin{notation}
    The elements of $\overline{M}\setminus M$ are denoted the \textbf{infinities} of $M$.
\end{notation}
\begin{notation}\label{topology of a topological magma notation}
    The topology of a topological unital magma $(M,+_{M})$ is marked $\tau_{M}$ accordingly. The topology of the corresponding extended topological unital magma is marked $\tau_{\overline{M}}$.
\end{notation}
Recall, a net $x_{\bullet}:A\to \overline{M}$ converges to a limit $x^{*}\in \overline{M}$ if for any open neighborhood $U$ of $x^{*}$ in $\overline{M}$, there exists some $\alpha_{0}\in A$ such that for all $A\ni \alpha>\alpha_{0}$, $x_{\alpha}\in U$. Since an extended topological magma is by definition a compact space, any net $x_{\bullet}:A\to \overline{M}$ has a convergent subnet, and since the extended topological magma is required to be Hausdorff, then the limit of any net, if it exists, is unique in $\overline{M}$. If a net $x_{\bullet}:A\to \overline{M}$ converges to a limit point in $M$, it's said to \textbf{converge to a point} in $M$. If it converges to a limit point in $\overline{M}\setminus M$, the net is said to \textbf{converge to an infinity} of $M$.

\subsubsection{Measures on the Borel $\sigma$-algebra of a topological space}\label{Measures on the Borel sigma algebra}
Now I will define a sense of "measure", as general as I can make it, that allows for notions of integration to be considered. Given a topological space $X$, let $\Sigma_{X}$ be the Borel $\sigma$-algebra of $X$.
\begin{definition}
    A subset $U\subseteq X$ is called a \textbf{measurable subset} of $X$ if it is an element of the Borel $\sigma$-algebra $\Sigma_{X}$ of $X$.
\end{definition}
\begin{definition}\label{measure definition}
    An \textbf{orientable measure}, or just \textbf{measure} for short, $\mu$ on a topological space $X$ is a function ${\mu:\Sigma_{X}\to\overline{M}}$ from the Borel $\sigma$-algebra of $X$ to an extended topological unital magma  $(\overline{M},+_{M})$, such that $\mu(\emptyset)=\bb{0}_{M}$.
\end{definition}
In this context, $\overline{M}$ is called the \textbf{measure codomain}. This notion of measure is in fact, as will be seen later in this paper, a generalization not only of ordinary measures, but of signed measures too. The fact that signed measures are included as a special case of orientable measures will be needed to define integration on oriented sets. Given a measure $\mu$, a measurable subset $A\in\Sigma_{X}$ is said to be of \textbf{zero measure} with respect to $\mu$ if $\mu(A) = \bb{0}_{M}$, of \textbf{finite measure} with respect to $\mu$ if $\mu(A)\in M$, and of \textbf{infinite measure} with respect to $\mu$ if $\mu(A) \in\overline{M}\setminus M$.
\begin{definition}
    The collection of all possible measures over a topological space $X$ with measure codomain $\overline{M}$ is called the \textbf{space of measures} on $X$ with target $\overline{M}$, and denoted as $\Xi(X,\overline{M})$.
\end{definition}
\begin{corollary}
    The space of measures $\Xi(X,\overline{M})$ inherits the unital magma operation of $M$ via
    \begin{equation}
        \forall U\in\Sigma_{X},\forall \mu,\nu\in\Xi(X,\overline{M}): (\mu +_{M} \nu)(U) := \mu(U) +_{M} \nu(U)
    \end{equation}
    And inherits its identity element via
    \begin{equation}
        \bb{0}_{\Xi}:\Sigma_{X}\to\bb{0}_{M}
    \end{equation}
    So "addition of measures" is well-defined on all pairs of measurable subsets of $X$ that are not both of infinite measure. Instead of denoting $\bb{0}_{\Xi}$ and $\bb{0}_{M}$ separately, I will denote $\bb{0}_{M}$ for both.
\end{corollary}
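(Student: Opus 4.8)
The plan is to verify the claim by unwinding the definitions pointwise: everything reduces to properties of the (partial) operation $+_{M}$ on the extended topological unital magma $\overline{M}$, applied setwise over $\Sigma_{X}$. First I would fix $\mu,\nu\in\Xi(X,\overline{M})$ and ask when the pointwise rule $(\mu +_{M}\nu)(U):=\mu(U)+_{M}\nu(U)$ yields a well-defined element of $\overline{M}$. By the definition of an extended topological unital magma, $+_{M}$ is defined on $\overline{M}\times M$ and on $M\times\overline{M}$ — that is, on every pair with at least one coordinate in $M$ — but need not be defined on $(\overline{M}\setminus M)\times(\overline{M}\setminus M)$. Hence $\mu(U)+_{M}\nu(U)$ makes sense exactly when $\mu(U)$ and $\nu(U)$ are not both in $\overline{M}\setminus M$, which by the definitions of finite/infinite measure is precisely the condition that $U$ fails to be of infinite measure with respect to at least one of $\mu,\nu$. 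This is the content of the closing sentence of the corollary.

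Next I would check that, whenever this domain condition holds for every $U\in\Sigma_{X}$, the resulting map $\mu+_{M}\nu:\Sigma_{X}\to\overline{M}$ is a measure in the sense of Definition \ref{measure definition}. The only requirement is $(\mu+_{M}\nu)(\emptyset)=\bb{0}_{M}$; since $\bb{0}_{M}\in M$ the operation is certainly defined at $\emptyset$, and $\mu(\emptyset)+_{M}\nu(\emptyset)=\bb{0}_{M}+_{M}\bb{0}_{M}=\bb{0}_{M}$ because $\bb{0}_{M}$ is an identity element. So $\mu+_{M}\nu\in\Xi(X,\overline{M})$, which gives the first displayed formula its meaning.

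For the identity element I would first observe $\bb{0}_{\Xi}(\emptyset)=\bb{0}_{M}$, so $\bb{0}_{\Xi}\in\Xi(X,\overline{M})$; moreover $\bb{0}_{\Xi}(U)=\bb{0}_{M}\in M$ for every $U$, so the sums $\mu+_{M}\bb{0}_{\Xi}$ and $\bb{0}_{\Xi}+_{M}\mu$ are defined for every $U$ with no possible clash of infinities — addition against $\bb{0}_{\Xi}$ is total. Then for each $U$ one has $(\mu+_{M}\bb{0}_{\Xi})(U)=\mu(U)+_{M}\bb{0}_{M}=\mu(U)$ and $(\bb{0}_{\Xi}+_{M}\mu)(U)=\bb{0}_{M}+_{M}\mu(U)=\mu(U)$, using exactly the clause of the definition of an extended topological unital magma stating that $\bb{0}_{M}$ remains a two-sided identity over all of $\overline{M}$. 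Hence $\bb{0}_{\Xi}$ is a two-sided identity for the inherited operation, and identifying it notationally with $\bb{0}_{M}$ is harmless.

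I do not expect a genuine obstacle here; the single point deserving care — and the only reason the statement is not entirely formal — is that $+_{M}$ on $\overline{M}$ is a \emph{partial} operation, so the structure induced on $\Xi(X,\overline{M})$ is a partial unital magma rather than a unital magma on the nose. All the work lies in identifying its domain of definition, and the answer ("not both of infinite measure") falls out immediately from the description of which pairs lie in $\overline{M}\times M\cup M\times\overline{M}$.
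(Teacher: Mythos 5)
Your proposal is correct and matches the paper's (implicit) justification: the paper states this corollary without proof, treating it as a direct unwinding of the definitions of $\Xi(X,\overline{M})$ and of the partial operation $+_{M}$ on $\overline{M}$, which is exactly what you do. Your added care in noting that the inherited structure is only a \emph{partial} unital magma, with domain determined by the pairs not both landing in $\overline{M}\setminus M$, is precisely the content of the paper's closing caveat about sets of infinite measure.
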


\subsection{Functions and integration on the Borel $\sigma$-algebra of a topological space}
\subsubsection{Simple integrals}
\begin{notation}
    The collection of all functions $f:X\to Y$, with $(Y,+_{Y})$ a topological unital magma titled in this context as the \textbf{function codomain}, is to be called the \textbf{space of functions} on $X$ with target $Y$, and marked $\mathcal{F}(X,Y)$.
\end{notation}
\begin{corollary}
    The space of functions $\mathcal{F}(X,Y)$ inherits the unital magma operation of $Y$ via
    \begin{equation}
        \forall f,h\in\mathcal{F}(X,Y),\forall x\in X: \br{f+_{Y}h}(x) := f(x) +_{Y} h(x)
    \end{equation}
    and the identity element of $Y$ via
    \begin{equation}
        \bb{0}_{\mathcal{F}}:X\to\bb{0}_{Y}
    \end{equation}
    So "addition of functions" is well-defined on $X$. Instead of denoting $\bb{0}_{\mathcal{F}}$ and $\bb{0}_{Y}$ separately, I will denote $\bb{0}_{Y}$ for both.
\end{corollary}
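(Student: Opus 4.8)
The plan is to verify directly that the pointwise binary operation together with the constant function $\bb{0}_{Y}$ equips $\mathcal{F}(X,Y)$ with the structure of a unital magma, reducing every assertion to the corresponding pointwise statement already available in $(Y,+_{Y})$. Concretely, there are three things to check: that the pointwise operation sends a pair of functions to a genuine element of $\mathcal{F}(X,Y)$, that $\bb{0}_{\mathcal{F}}$ is itself an element of $\mathcal{F}(X,Y)$, and that $\bb{0}_{\mathcal{F}}$ is a two-sided identity for the pointwise operation.

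First I would check closure. Given $f,h\in\mathcal{F}(X,Y)$, the assignment $x\mapsto f(x)+_{Y}h(x)$ is a well-defined function $X\to Y$: for each $x\in X$ we have $f(x),h(x)\in Y$, and $+_{Y}\colon Y\times Y\to Y$ is a total map, so $f(x)+_{Y}h(x)$ is a uniquely determined element of $Y$. Since $\mathcal{F}(X,Y)$ was defined as the collection of \emph{all} functions $X\to Y$, with no continuity or measurability constraint imposed on its members, membership of $f+_{Y}h$ requires nothing further. Likewise $\bb{0}_{\mathcal{F}}\colon X\to\set{\bb{0}_{Y}}\subseteq Y$ is the constant function at the identity of $Y$, hence an element of $\mathcal{F}(X,Y)$. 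Then for the identity axiom, for any $f\in\mathcal{F}(X,Y)$ and any $x\in X$ one computes $(f+_{Y}\bb{0}_{\mathcal{F}})(x)=f(x)+_{Y}\bb{0}_{Y}=f(x)$ and $(\bb{0}_{\mathcal{F}}+_{Y}f)(x)=\bb{0}_{Y}+_{Y}f(x)=f(x)$, using only that $\bb{0}_{Y}$ is a two-sided identity in $Y$; since functions agreeing at every point of $X$ are equal, $f+_{Y}\bb{0}_{\mathcal{F}}=f=\bb{0}_{\mathcal{F}}+_{Y}f$. This exhibits $(\mathcal{F}(X,Y),+_{Y},\bb{0}_{\mathcal{F}})$ as a unital magma, and the final sentence of the statement is then merely a notational convention, justified because the constant function at the identity is uniquely determined by $\bb{0}_{Y}$.

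I do not anticipate a genuine obstacle: the statement is a pointwise lift of the unital magma axioms, and the only real subtlety is to avoid over-claiming. In particular one should not assert that $\mathcal{F}(X,Y)$ inherits associativity, commutativity, a topology, or continuity of its elements, since the definition of a topological unital magma does not demand associativity and $\mathcal{F}(X,Y)$ has not been endowed with any topology; the corollary claims precisely the unital-magma structure and nothing more. The analogous argument for the space of measures $\Xi(X,\overline{M})$ is identical in spirit, with the extra remark that addition of measures lands in $\overline{M}$ and hence is only guaranteed to be defined when the two measures are not both infinite on the set in question — exactly as recorded in the preceding corollary.
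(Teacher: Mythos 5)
Your verification is correct and matches the paper's (implicit) reasoning exactly: the paper states this corollary without proof, treating it as the standard pointwise lift of the unital magma structure of $Y$ to $\mathcal{F}(X,Y)$, which is precisely what you check. Your added care in noting what is \emph{not} being claimed (no associativity, no topology on $\mathcal{F}(X,Y)$) is consistent with the paper's definitions and does not diverge from its approach.
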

\begin{definition}\label{integration element definition}
    Given a topological space $X$, a measure codomain $(\overline{M},+_{M})$, and a function codomain $(Y,+_{Y})$, an \textbf{integration element} is a mapping $g:Y\times\overline{M}\to\overline{G}$, such that $g$ is bi-distributive over the magma operations $+_{Y}$ and $+_{M}$:
    \begin{enumerate}
        \item $\forall a,b\in Y,m\in \overline{M}: g(a +_{Y} b,m) = g(a,m) +_{G} g(b,m)$

        \item $\forall a\in Y,m\in M,n\in\overline{M}: \begin{cases}
            g(a,m +_{M} n) = g(a,m) +_{G} g(a,n)\\
            g(a,n +_{M} m) = g(a,n) +_{G} g(a,m)
        \end{cases}$
    \end{enumerate}
    And such that
    $$\forall m\in \overline{M}: g\br{\bb{0}_{Y},m} = \bb{0}_{G}$$
    Where $(\overline{G},+_{G})$ is an extended topological unital magma titled the \textbf{integration codomain}.
\end{definition}
The integration codomain $(\overline{G},+_{G})$ is an extended topological unital magma that partially inherits its magma operation $+_{G}$ from $+_{Y}$ and $+_{M}$, and fully inherits its identity element via $\forall a\in Y,m\in\overline{M}: g(\bb{0}_{Y},m) = \bb{0}_{G}$.
\begin{definition}
    Given a mapping $f:A\times B\to C$ from two unital magmas $(A,+_{A}),(B,+_{B})$ to a unital magma $(C,+_{C})$ that is bi-distributive in the sense shown in definition \ref{integration element definition}, the \textbf{arithmetic structure} of $f$ is the expression of $f\br{a+_{A}b,c+_{B}d}$ in terms of $f(a,c),f(b,c),f(a,d),f(b,d)$ and $+_{C}$ for all $a,b\in A,c,d\in B$. Note, if the unital magma operations $+_{A},+_{B},+_{C}$ are commutative and associative, there is only one possible arithmetic structure of $f$. If $+_{A},+_{B},+_{C}$ aren't commutative and associative, there is more than one possible arithmetic structure, but the number of possibilities is finite.
\end{definition}
Note that integration elements $g:Y\times\overline{M}\to\overline{G}$ are such bi-distributive maps over $(Y,+_{Y}),(M,+_{M})$ and $(G,+_{G})$, so any integration element must have an arithmetic structure accordingly. Now that the integration element is defined, a notion of integration can be defined accordingly. Let $X$ be a topological space with Borel $\sigma$-algebra $\Sigma_{X}$, let $(Y,+_{Y})$ be a function codomain, let $\Xi(X,\overline{M})$ be a space of measures on $X$ with measure codomain $(\overline{M},+_{M})$, and let $g:Y\times\overline{M}\to \overline{G}$ be the element of integration with integration codomain $(\overline{G},+_{G})$.
\begin{definition}
    Given the above setting, an \textbf{indicator function} of a measurable subset $S\in\Sigma_{X}$ is any function $I_{S}\in\mathcal{F}(X,Y)$ defined as follows:
    \begin{equation}\label{indicator function}
        \forall x\in X: I_{S}(c:x) = \begin{cases}
            c &, x\in S\\
            0_{Y} &, x\notin S
        \end{cases}
    \end{equation}
    Where $0_{Y}\neq c\in Y$ is some non-identity constant in a unital magma $(Y,+_{Y})$.
\end{definition}
\begin{definition}\label{definition of simple integral}
    Given the above setting, a \textbf{simple integral} $\int\limits_{U}g\br{f,d\mu}$ of a function $f:X\to Y$ against a measure $\mu:\Sigma_{X}\to\overline{M}$ over a measurable subset $U\in\Sigma_{X}$ is any map $\int\limits_{U}:\mathcal{F}(X,Y)\times\Xi(X,\overline{M})\to \overline{G}$ that satisfies the following properties:
    \begin{enumerate}
        \item \textbf{Arithmetic structure:}\\
        The simple integral operation $\int\limits_{U}:\mathcal{F}(X,Y)\times\Xi(X,\overline{M})\to \overline{G}$ must be a bi-distributive mapping sharing the same arithmetic structure that the integration element $g:Y\times\overline{M}\to \overline{G}$ has.

        \item \textbf{Simple integration of indicator functions:}\\
        Given an indicator function $I_{S}(c:x) = \begin{cases}
            c &, x\in S\\
            0_{Y} &, x\notin S
        \end{cases}$ with $U,S\in\Sigma_{X}$ measurable subsets of $X$, and given a measure $\mu\in\Xi(X,\overline{M})$, the simple integral of $I_{S}(c:x)$ is
        \begin{equation}\label{constant functions are indicator functions}
            \int\limits_{U}g\br{I_{S}(c:x),d\mu}:=g\br{c,\mu\br{U\intersect S}}
        \end{equation}
    \end{enumerate}
\end{definition}
Note, the simple integral doesn't have to be defined on all of $\mathcal{F}(X,Y)\times\Xi(X,\overline{M})$, only on indicator functions and on \textit{integrable simple functions}, see definitions \ref{simple function definition}, \ref{integrable simple functions} and theorem \ref{theorem: integral of a simple function}.\\
Example of the arithmetic structure demand of definition \ref{definition of simple integral}:\\
\textbf{If} $\forall a,b\in Y,m,n\in M:g(a+_{Y}b,m+_{M}n) = g(a,m) +_{G} g(b,m) +_{G} g(a,n) +_{G} g(b,n)$ \textbf{then}\\
\footnotesize
$\forall f,h\in\mathcal{F}(X,Y),\mu,\nu\in\Xi(X,\overline{M}):\int\limits_{U}g\br{f +_{\mathcal{F}} h\,,\,d\mu +_{\Xi} d\nu} = \int\limits_{U}g\br{f,d\mu} \,+_{G}\,\int\limits_{U}g\br{h,d\mu} \,+_{G}\, \int\limits_{U}g\br{f,d\nu} \,+_{G}\,  \int\limits_{U}g\br{h,d\nu}$\mbox{}\\
And\\
\textbf{If} $\forall a,b\in Y,m,n\in M:g(a+_{Y}b,m+_{M}n) = g(a,m) +_{G} g(a,n) +_{G} g(b,m) +_{G} g(b,n)$ \textbf{then}\\
\footnotesize
$\forall f,h\in\mathcal{F}(X,Y),\mu,\nu\in\Xi(X,\overline{M}):\int\limits_{U}g\br{f +_{\mathcal{F}} h\,,\,d\mu +_{\Xi} d\nu} = \int\limits_{U}g\br{f,d\mu} \,+_{G}\, \int\limits_{U}g\br{f,d\nu} \,+_{G}\,\int\limits_{U}g\br{h,d\mu} \,+_{G}\,  \int\limits_{U}g\br{h,d\nu}$\mbox{}\\
\normalsize
With the relations holding whenever the RHS of 4 equations above is well-defined. In short, going from integration element to simple integral is a structure-preserving process, similar to the notion of homomorphism.
\begin{corollary}\label{bi-distributivity of integrals}
    A simple integral $\int\limits_{U}:\mathcal{F}(X,Y)\times\Xi(X,\overline{M})\to\overline{G}$ is by definition bi-distributive over functions in $\mathcal{F}(X,Y)$ and measures in $\Xi(X,\overline{M})$ the same way its integration element $g:Y\times\overline{M}\to\overline{G}$ is bi-distributive over elements of $Y$ and $\overline{M}$:
    \begin{align}
        \forall f,h\in\mathcal{F}(X,Y),\mu\in\Xi(X,\overline{M}): \int\limits_{U}g(f+_{Y}h,\mu) &= \int\limits_{U}g(f,\mu) +_{G} \int\limits_{U}g(h,\mu)\\
        \forall f\in\mathcal{F}(X,Y),\mu,\nu\in\Xi(X,\overline{M}): \int\limits_{U}g(f,\mu+_{M}\nu) &= \int\limits_{U}g(f,\mu) +_{G} \int\limits_{U}g(f,\nu)
    \end{align}
\end{corollary}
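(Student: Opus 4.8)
The plan is to read the statement off directly from Definition \ref{definition of simple integral}. Property 1 there stipulates that the simple integral $\int\limits_{U}:\mathcal{F}(X,Y)\times\Xi(X,\overline{M})\to\overline{G}$ is a bi-distributive mapping. So the first step is simply to spell out what ``bi-distributive'' means in this instance, that is, to apply Definition \ref{integration element definition} with the source unital magmas taken to be the induced magmas $(\mathcal{F}(X,Y),+_{Y})$ and $(\Xi(X,\overline{M}),+_{M})$ constructed in the two preceding corollaries, and the target taken to be the integration codomain $(\overline{G},+_{G})$. Clauses 1 and 2 of Definition \ref{integration element definition}, transcribed in this way, are exactly the two displayed equations: clause 1 yields $\int\limits_{U}g(f+_{Y}h,\mu)=\int\limits_{U}g(f,\mu)+_{G}\int\limits_{U}g(h,\mu)$, and clause 2 yields $\int\limits_{U}g(f,\mu+_{M}\nu)=\int\limits_{U}g(f,\mu)+_{G}\int\limits_{U}g(f,\nu)$.

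The only point that needs care is that the operations involved are partial: addition in $\Xi(X,\overline{M})$ is only defined when $\mu$ and $\nu$ are not both of infinite measure on the sets in question, and $+_{G}$ is only partially inherited on $\overline{G}$. Hence I would state the second identity precisely on the domain where $\mu+_{M}\nu$ is a well-defined measure and the $\overline{G}$-valued sum on the right-hand side exists, in keeping with the standing ``whenever the right-hand side is well-defined'' proviso attached to the arithmetic-structure examples following Definition \ref{definition of simple integral}. The first identity is unconditional, since $Y$ is a genuine (non-extended) topological unital magma, so $\mathcal{F}(X,Y)$ is closed under $+_{Y}$ and $g$, hence $\int\limits_{U}$, always distributes over the first slot.

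I do not expect a real obstacle here: the corollary is the single-variable shadow of the ``same arithmetic structure'' requirement in Definition \ref{definition of simple integral}, and it fits the homomorphism analogy noted there. The only thing to watch is not to overstate the scope of the identities beyond where $+_{M}$ on measures and $+_{G}$ on the integration codomain are actually defined.
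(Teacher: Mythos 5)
Your proposal is correct and matches the paper's own (implicit) reasoning: the corollary is stated as following ``by definition'' from the arithmetic-structure requirement of Definition \ref{definition of simple integral}, and transcribing clauses 1 and 2 of Definition \ref{integration element definition} for the induced magmas $(\mathcal{F}(X,Y),+_{Y})$ and $(\Xi(X,\overline{M}),+_{M})$ is exactly the intended argument. Your added caveat about restricting the second identity to where $\mu+_{M}\nu$ and the $+_{G}$-sum are actually defined is consistent with the paper's standing ``whenever the RHS is well-defined'' proviso.
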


\subsubsection{Simple functions and IA integration}
\begin{notation}
    Given a unital magma $(A,+_{A})$ and a sequence of elements $a_{1},...,a_{n}\in A$, the following notation is used:
    \begin{equation}
        \sumx{A}{i=1}{n}a_{i} := a_{1} \,+_{A}\,...\,+_{A}\, a_{n}
    \end{equation}
    Where the order of operations is chosen to be from left to right. An analogous theory exists for an order of operations chosen to be from right to left, and for any other choice of ordering - the choice need only be consistent.
\end{notation}
\begin{definition}\label{simple function definition}
    A \textbf{simple function} on the topological space $X$ is a finite magma sum of indicator functions of measurable subsets of $X$:
    \begin{equation}
        \forall x\in X: f_{n}(x) := \sumx{Y}{k=1}{n}I_{S_{k}}(c_{k}:x)
    \end{equation}
    Where $\forall k=1,...,n: S_{k}\in\Sigma_{X}$.
\end{definition}
\begin{definition}\label{integrable simple functions}
    Given a measure $\mu\in\Xi(X,\overline{M})$ on $X$, a simple function $f_{n}(x) = \sumx{Y}{k=1}{n}I_{S_{k}}(c_{k}:x)$ is said to be \textbf{integrable} with respect to $\mu$ on a measurable subset $U\in\Sigma_{X}$ if
    \begin{equation}
        \forall k=1,...,n: \mu\br{S_{K}\intersect U} \in M
    \end{equation}
\end{definition}
\begin{theorem}\label{theorem: integral of a simple function}
    \textbf{Simple integration of simple functions:} It trivially follows from definition \ref{definition of simple integral} that given a simple function $f_{n}(x) = \sumx{Y}{k=1}{n}I_{S_{k}}(c_{k}:x)$ that is integrable with respect to a measure $\mu\in\Xi(X,\overline{M})$ on a measurable subset $U\in\Sigma_{X}$, the simple integral of $f_{n}$ against $\mu$ on $U$ must be
    \begin{equation}\label{the integral of a simple function}
        \int\limits_{U}g\br{f_{n},d\mu} = \sumx{G}{k=1}{n}\, g\br{c_{k},\mu\br{S_{k}\intersect U}}
    \end{equation}
    And definition \ref{integrable simple functions} guarantees that the unital magma sum on the RHS of eq. \ref{the integral of a simple function} is a well-defined element of the integration codomain $\overline{G}$.
\end{theorem}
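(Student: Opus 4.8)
The plan is to read the formula straight off the two defining clauses of a simple integral in Definition~\ref{definition of simple integral}, by induction on the number $n$ of indicator summands. First I would write the simple function, as an element of $\mathcal{F}(X,Y)$, as the left-to-right magma sum
\[
f_{n} \;=\; I_{S_{1}}(c_{1}:\cdot) \,+_{Y}\, I_{S_{2}}(c_{2}:\cdot) \,+_{Y}\, \cdots \,+_{Y}\, I_{S_{n}}(c_{n}:\cdot),
\]
so that, under the fixed left-to-right order of operations, $f_{n} = f_{n-1} \,+_{Y}\, I_{S_{n}}(c_{n}:\cdot)$ with $f_{n-1}$ the analogous sum of the first $n-1$ indicators.

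For the inductive step I would invoke the bi-distributivity of the simple integral over functions, that is, the first identity of Corollary~\ref{bi-distributivity of integrals}, which is itself nothing but clause~1 (the shared arithmetic structure with $g$) of Definition~\ref{definition of simple integral}; peeling off the last summand yields
\[
\int\limits_{U} g\br{f_{n},d\mu} \;=\; \int\limits_{U} g\br{f_{n-1},d\mu} \;+_{G}\; \int\limits_{U} g\br{I_{S_{n}}(c_{n}:\cdot),d\mu},
\]
and the induction hypothesis disposes of the first term on the right. Because $+_{Y}$ and $+_{G}$ need not be associative or commutative, the bracketing has to be tracked honestly, but the left-to-right convention together with left-distributivity of $g$ in its first slot, $g(a+_{Y}b,m)=g(a,m)+_{G}g(b,m)$ (inherited by $\int_{U}$), forces the unfolding and produces exactly the left-to-right sum $\sumx{G}{k=1}{n} g\br{c_{k},\mu\br{S_{k}\intersect U}}$ with no residual ambiguity. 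The base case $n=1$ is clause~2 of Definition~\ref{definition of simple integral}, i.e.\ \eqref{constant functions are indicator functions}: $\int_{U} g\br{I_{S_{1}}(c_{1}:\cdot),d\mu} = g\br{c_{1},\mu\br{U\intersect S_{1}}}$.

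Assembling the base case and the inductive step gives the claimed identity, and it remains only to certify that its right-hand side is an honest element of $\overline{G}$. Integrability of $f_{n}$ on $U$ against $\mu$, by Definition~\ref{integrable simple functions}, is precisely the statement that $\mu\br{S_{k}\intersect U}\in M$ for every $k=1,\dots,n$; and the integration-element axioms of Definition~\ref{integration element definition}, whose clause~2 pairs $g(a,m)$ with $g(a,n)$ for an arbitrary $n\in\overline{M}$, force $g(a,m)$ to lie in $G$ (not among the infinities) whenever $m\in M$, since that is exactly the region on which $+_{G}$ is everywhere defined. Hence each $g\br{c_{k},\mu\br{S_{k}\intersect U}}$ lies in $G$, so the right-hand side is a finite $+_{G}$-sum of finitely many elements of $G$, hence lies in $G\subseteq\overline{G}$ and is in particular well-defined despite the partiality of $+_{G}$ on the infinities. (The same finiteness observation, carried along the induction, keeps every intermediate sum inside $G$, so Corollary~\ref{bi-distributivity of integrals} applies at each step without the usual "whenever the right-hand side is defined" caveat.)

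I expect the only real obstacle to be this last step: confirming that a "finite measure value" genuinely gets carried by $g$ into the region of $\overline{G}$ where the iterated magma sum never meets two infinities, and verifying that the chosen order of operations survives the induction intact. The algebraic core --- distribute over the sum, then evaluate on each indicator --- is immediate once the arithmetic structure of the simple integral from Definition~\ref{definition of simple integral} is in hand, which is why the theorem can be asserted to follow trivially.
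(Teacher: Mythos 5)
Your proof is correct and is essentially the argument the paper intends: the theorem is asserted to follow "trivially" by unfolding the magma sum of indicators via the arithmetic-structure clause of Definition \ref{definition of simple integral} and then applying the indicator-function clause \eqref{constant functions are indicator functions} to each term, which is precisely your induction. Your added care about the left-to-right bracketing and the observation that clause~2 of Definition \ref{integration element definition} keeps $g(c_{k},\mu(S_{k}\cap U))$ inside $G$ (so the iterated $+_{G}$ never meets two infinities) is a welcome sharpening of the well-definedness claim the paper leaves implicit.
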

Now that I have a notion for integration of simple functions, I can use it to define integration of non-simple functions as well. As per notation \ref{topology of a topological magma notation}, let $\tau_{Y}$ be the topology of the topological unital magma $(Y,+_{Y})$.
\begin{definition}
    Given a function $f\in\mathcal{F}(X,Y)$ and a measure $\mu\in\Xi(X,\mathcal{M})$, an \textbf{integrable approximation} (IA) of $f$ with respect to $\mu$ on $U$, denoted $\set{s_{k}|U}_{k=1}^{\infty}\stackrel{\mu}{\to} f$, on a measurable subset $U\in\Sigma_{X}$ is a series of simple functions $\set{s_{k}}_{k=1}^{\infty}$ such that:
    \begin{enumerate}
        \item For all $k\in\N$, $s_{k}$ is integrable on $U$ with respect to $\mu$
        
        \item The sequence $\set{s_{k}}_{k=1}^{\infty}$ converges pointwise to $f$ for all $x\in U$:\\
        For all $x\in U$, and for any open neighborhood $V\in\tau_{Y}$ such that $f(x)\in V$, there exists some $K\in\N$, such that for all $k>K$, $s_{k}(x)\in V$.
    \end{enumerate}
\end{definition}
For any IA $\set{s_{k}|U}_{k=1}^{\infty}\stackrel{\mu}{\to} f$ of a function $f\in\mathcal{F}(X,Y)$ on $U$ with respect to a measure $\mu\in\Xi(X,\mathcal{M})$, there is an associated sequence of simple integrals $\set{\int\limits_{U}g\br{s_{k},d\mu}}_{k=1}^{\infty}$ that by theorem \ref{theorem: integral of a simple function} are all well-defined. Therefore, I can now define:
\begin{definition}\label{IA integral definition}
    Given a function $f\in\mathcal{F}(X,Y)$ and a collection of infinite sequences of simple functions\\
    $L[f](X,Y)\subseteq \br{\mathcal{F}(X,Y)}^{\N}$, the \textbf{$L[f](X,Y)$-IA integral} of $f$ on $U$ with respect to a measure\\
    $\mu\in\Xi(X,\mathcal{M})$, is, if it exists, the unique value $I\in \overline{G}$ of the integration codomain $\overline{G}$ such that $$I = \lim\limits_{k\to\infty}\int\limits_{U}g\br{s_{k},d\mu}$$ for all possible IAs $\set{s_{k}|U}_{k=1}^{\infty}\stackrel{\mu}{\to} f$ such that $\set{s_{k}}_{n=1}^{\infty}\in L[f](X,Y)$.\\
    The $L[f](X,Y)$-IA integral of a function $f$ on a region $U$ against measure $\mu$ is simply denoted as $\int\limits_{U}g(f,d\mu)$, the same as in definition \ref{definition of simple integral}.
\end{definition}
In the context of definition \ref{IA integral definition}, an IA $\set{s_{k}|U}_{k=1}^{\infty}\stackrel{\mu}{\to} f$ such that $\set{s_{k}}_{n=1}^{\infty}\in L[f](X,Y)$ is called an $L[f](X,Y)$-IA of $f$.
\begin{definition}
    If a function $f\in\mathcal{F}(X,Y)$ has any two $L[f](X,Y)$-IAs, $\set{s_{1,k}|U}_{k=1}^{\infty}\stackrel{\mu}{\to} f$ and $\set{s_{2,k}|U}_{k=1}^{\infty}\stackrel{\mu}{\to} f$ such that $\lim\limits_{k\to\infty}\int\limits_{U}g\br{s_{1,k}d\mu} \neq \lim\limits_{k\to\infty}\int\limits_{U}g\br{s_{2,k}d\mu}$, then the $L[f](X,Y)$-IA integral of $f$ on $U$ with respect to $\mu$ is said to be \textbf{undefined}.
\end{definition}
For functions $f\in\mathcal{F}(X,Y)$ whose $L[f](X,Y)$-IA integral is undefined on a region $U\in\Sigma_{X}$ with respect to a measure $\mu\in\Xi(X,\overline{M})$, a construction analogous to Cauchy's principal value may be achieved by defining a systematic way to choose a specific reduced collection $\mathcal{L}[f](X,Y)\subset L[f](X,Y)$ on $U$, such that the $\mathcal{L}[f](X,Y)$-IA integral of $f$ is (hopefully) defined. The specific choice of $\mathcal{L}[f](X,Y)$ would then fulfill the same role as the classic Cauchy principal value (or any of its variations) does in ordinary analysis. However, there may be some cases where even that can't be done:
\begin{definition}
    Given a function $f\in\mathcal{F}(X,Y)$, if there doesn't exist any $L[f](X,Y)$-IA of $f$ on $U$ with respect to $\mu$ that has a convergent integral series, then the $L[f](X,Y)$-IA integral of $f$ on $U$ with respect to $\mu$ is said to be \textbf{fundamentally undefined}.
\end{definition}
Note, that while by theorem \ref{theorem: integral of a simple function} the simple integral of an integrable simple function $s$ is always defined, the question of whether or not its $L[s](X,Y)$-IA integral is defined depends on the choice of $L[s](X,Y)$.\\
\noindent\rule{12cm}{0.4pt}
\begin{example}
    \textbf{IA-Riemann integration}\\
    Let the topological space in question be $X=\R$, let the function codomain be $(Y,+_{Y})=(\R,+)$, let the measure codomain be $(\overline{M},+_{M})=([0,\infty],+)$, and let the element of integration be
    $$\forall a\in Y,m\in \overline{M}: g(a,m) = a\cdot m$$
    With integration codomain $(\overline{G},+_{G})=\br{[-\infty,\infty],+}$, with $+,\cdot$ being ordinary real number addition and multiplication. For an interval $[a,b]\subset\R$, a \textbf{tagged partition} of $[a,b]$ is a structure of the form $P = \set{t_{i},[x_{i},x_{i+1}]}_{i=1}^{n}$ with $n\in\N$, such that $t_{i}\in[x_{i},x_{i+1}]$ and $x_{i+1}>x_{i}$ for all $i=1,...,n$, and $[a,b]=\union\limits_{i=1}^{n}[x_{i},x_{i+1}]$. Given a function $f\in\mathcal{F}(X,Y)$, let the \textbf{$P$-approximation} of $f$ on $[a,b]$ be the simple function on $[a,b]$ defined as $\forall x\in[a,b]:f_{P}(x) = \sum\limits_{i=1}^{n}I_{[x_{i},x_{i+1}]}(f(t_{i});x)$.\\
    The \textbf{IA-Riemann integral}, of a function $f$ on $[a,b]$ against a measure $\mu\in\Xi([a,b],[0,\infty])$ is the $L[f]([a,b],\R)$-IA integral of $f$ on $[a,b]$ against $\mu$, with $L[f]([a,b],\R)$ chosen such that only IAs made of $P$-approximations of $f$ on $[a,b]$, for all partitions $P$ of $[a,b]$, are allowed. The IA-Riemann integral clearly coincides with the ordinary Riemann integral, as partition refinements define an IA for all Riemann-integrable functions on $[a,b]$, and it's easy to show that a function is Riemann-integrable in the ordinary sense on $[a,b]$ if and only if it's IA-Riemann integrable on $[a,b]$. Definition \ref{IA integral definition} naturally gives that the IA-Riemann integral is to be denoted as $\int\limits_{[a,b]}fd\mu$, accordingly. The domain of this integral can be generalized from an interval $[a,b]$ to arbitrary unions of such intervals.
\end{example}
\begin{example}
    \textbf{The IA geometric product integral}\\
    Let the topological space in question be $X=\R$, let the function codomain be $(Y,+_{Y})=(\R^{+},\cdot)$, let the measure codomain be $(\overline{M},+_{M})=([0,\infty],+)$, and let the element of integration be
    $$\forall a\in Y,m\in \overline{M}: g(a,m) = a^{m}$$
    With integration codomain $(\overline{G},+_{G})=\br{[0,\infty],\cdot}$, with $+,\cdot$ being ordinary real number addition and multiplication, and $a^{m}$ being $a$ to the power of $m$. Defining partitions of the interval $[a,b]$ the same way as in the previous example, given a function $f\in\mathcal{F}(X,Y)$, let the \textbf{geometric $P$-approximation} of $f$ on $[a,b]$ be the simple function on $[a,b]$ defined as $\forall x\in[a,b]:f_{P}(x) = \prod\limits_{i=1}^{n}I_{[x_{i},x_{i+1}]}(f(t_{i});x)$. Note, since the function codomain's magma operation $+_{Y}$ is in this case real number multiplication, the identity element used in the definition  of indicator functions is $\bb{0}_{Y}=1$.  The \textbf{IA geometric product integral} of a function $f$ on $[a,b]$ against a measure $\mu\in\Xi([a,b],[0,\infty])$ is the $L[f]([a,b],\R)$-IA integral of $f$ on $[a,b]$ against $\mu$, with $L[f]([a,b],\R)$ chosen such that only IAs made of $P$-approximations of $f$ on $[a,b]$, for all partitions $P$ of $[a,b]$, are allowed. Definition \ref{IA integral definition} naturally gives that the IA geometric product integral\\
    is to be denoted as $\int\limits_{[a,b]}f^{d\mu}$, accordingly. The domain of this integral can be generalized from an interval $[a,b]$ to arbitrary unions of such intervals.
\end{example}
\begin{example}
    \textbf{The IA Lebesgue integral}\\
    Let the topological space in question be $X=\R$, let the function codomain be $(Y,+_{Y})=(\R,+)$, let the measure codomain be $(\overline{M},+_{M})=([0,\infty],+)$, and let the element of integration be
    $$\forall a\in Y,m\in \overline{M}: g(a,m) = a\cdot m$$
    With integration codomain $(\overline{G},+_{G})=\br{[-\infty,\infty],+}$, with $+,\cdot$ being ordinary real number addition and multiplication. The \textbf{IA Lebesgue integral} of a function $f\in\mathcal{F}\br{\R,\R}$ on $\R$ against a measure $\mu\in\Xi(\R,[0,\infty])$ is the $L[f](\R,\R)$-IA integral of $f$, with $L[f](\R,\R)$ chosen to only allow IAs $\set{s_{k}|U}_{k=1}^{\infty}\stackrel{\mu}{\to} f$ that satisfy the following conditions:
    \begin{enumerate}
        \item $\forall k\in\N, x\in\R: \abs{s_{k}(x)} \leq \abs{f(x)}$

        \item $\forall k\in\N, x\in\R: \abs{s_{k}(x)} \leq \abs{s_{k+1}(x)}$
    \end{enumerate}
    IAs $\set{s_{k}|U}_{k=1}^{\infty}\stackrel{\mu}{\to} f$ that satisfy these conditions are to be known as \textbf{Lebesgue IAs}. Definition \ref{IA integral definition} naturally gives that the IA Lebesgue integral above is to be denoted as $\int\limits_{\R}fd\mu$, accordingly.\\
    \textbf{Claim:} The IA Lebesgue integral is, in fact, the ordinary Lebesgue integral.
    \begin{proof}
        Let $f(x) = f^{+}(x) - f^{-}(x)$, where $f^{+}(x) :=\begin{cases}
            f(x) &f(x)\geq 0\\
            0 &f(x)\leq 0
        \end{cases}$ and $f^{-}(x):=\begin{cases}
            -f(x) &f(x)\leq 0\\
            0 &f(x)\geq 0
        \end{cases}$\\
        It's clear by corollary \ref{bi-distributivity of integrals} and the definition of IA integration in terms of simple integrals that for any measure $\mu\in\Xi(\R,[0,\infty])$, $\int\limits_{\R}fd\mu = \int\limits_{\R}f^{+}d\mu - \int\limits_{\R}f^{-}d\mu$. First looking at $\int\limits_{\R}f^{+}d\mu$, note that the simple integrals of simple functions coincide in this setting with the ordinary Lebesgue integration of simple functions. Thus, the monotone convergence theorem for non-negative functions can be applied, and it's clear that any Lebesgue IA $\set{s_{k}^{+}|U}_{k=1}^{\infty}\stackrel{\mu}{\to} f^{+}$ converges to a limit in $[0,\infty]$. Furthermore, the monotone convergence theorem implies that $\lim\limits_{k\to\infty}\int\limits_{\R}s_{k}^{+}d\mu = \int\limits_{\R}\lim\limits_{k\to\infty}s_{k}^{+}d\mu$, so therefore for any two Lebesgue IAs $\set{s_{1,k}^{+}|U}_{k=1}^{\infty}\stackrel{\mu}{\to} f^{+}$, $\set{s_{2,k}^{+}|U}_{k=1}^{\infty}\stackrel{\mu}{\to} f^{+}$, it results that $\lim\limits_{k\to\infty}\int\limits_{\R}s_{1,k}^{+}d\mu - \lim\limits_{k\to\infty}\int\limits_{\R}s_{2,k}^{+}d\mu = \int\limits_{\R}\lim\limits_{k\to\infty}\br{s_{1,k}^{+} - s_{2,k}^{+}}d\mu = \int\limits_{\R}0\cdot d\mu = 0$,\\
        so all Lebesgue IAs $\set{s_{k}^{+}|U}_{k=1}^{\infty}\stackrel{\mu}{\to} f^{+}$ converge to the same limit, implying that they all converge to the supremum $\sup\int\limits_{\R}sd\mu$ over all simple functions $s$ satisfying $\forall x\in\R: 0 \leq s(x) \leq f^{+}(x)$, meaning that the IA Lebesgue integral of the non-negative function $f^{+}$ equals its standard Lebesgue integral. Repeating the same process with $f^{-}$, and noting that for all Lebesgue IAs $\set{s_{k}^{+}|U}_{k=1}^{\infty}\stackrel{\mu}{\to} f^{+}\,,\,\set{s_{k}^{-}|U}_{k=1}^{\infty}\stackrel{\mu}{\to} f^{-}$, the sequence $\set{s_{k}}_{k=1}^{\infty}$ defined by $s_{k}(x) := \begin{cases}
            s_{k}^{+} &s_{k}^{+} \geq 0\\
            -s_{k}^{-} &s_{k}^{-} \geq 0
        \end{cases}$ is a Lebesgue IA of $f(x)$, and in fact, and Lebesgue IA of $f$ can be written that way. Therefore, the IA Lebesgue integral of $f$ is exactly equal to the subtraction of standard Lebesgue integrals $\int\limits_{\R}f^{+}d\mu - \int\limits_{\R}f^{-}d\mu$, thus making the IA Lebesgue integral of $f$ simply its ordinary Lebesgue integral.
    \end{proof}
\end{example}
\noindent\rule{12cm}{0.4pt}\\
Further examples such Riemann integration on PCF fractals a-la Strichartz\cite{strichartz2006differential}, stochastic It\^o integration, and more, can similarly be defined as special cases of IA integration. Note that while both of the above examples used measure codomains corresponding to unsigned measures for simplicity's sake, as will be needed in the next section, measures in this paper are generally meant as a generalization of signed measures.\\
\textbf{From this point on in the paper, only IA integrals will be considered, as their construction naturally avoids problems with integration on sets of infinite measure.}

\section{A Topological Construction of Integrable Chain Complexes}\label{A Topological Construction of Integrable Chain Complexes}
\subsection{Measures, functions, orientation, and IA integration on integrable chain spaces of a topological space}
\subsubsection{Measures and IA integration on subspace topologies}\label{Measures and integration on subspace topologies}
Given a topological space $X$, sometimes it makes sense to define a non-trivial "size", aka measure, on subsets of $X$ that are either not members of the Borel $\sigma$-algebra $\Sigma_{X}$, or may have zero measure for all non-singular measures on $\Sigma_{X}$. For example, the Lebesgue measure of a smooth 1D curve $c$ embedded in $\R^{n}$ is zero under the standard $\R^{n}$ topology, but one may still want a non-trivial "length" measure of the curve. For this purpose, I will now define:
\begin{definition}\label{subset Borel sigma-algebra}
    Given a topological space $X$ and a subset $S\subseteq X$, the \textbf{subspace Borel $\sigma$-algebra} of $S$ is the Borel $\sigma$-algebra $\Sigma_{S}$ generated by the subspace topology $\tau_{S}^{X}$ induced on $S$ by $X$.
\end{definition}
From there, all previous definitions and resulting properties of measures can be readily applied, simply replacing $\Sigma_{X}$ with $\Sigma_{S}$. As such, I will simply denote: 
\begin{notation}
    For a subset $S\subseteq X$ of a topological space $X$, the space of measures on the subspace Borel $\sigma$-algebra of $S$ is denoted as $\Xi(\Sigma_{S},\overline{M}_{S})$. 
\end{notation}
For example, for a 1D curve $c$ embedded in $\R^{n}$, the Lebesgue measure under the standard $\R^{n}$ topology will give $\mu_{\R^{n}}(c)=0$, but a suitable Lebesgue measure under the subspace topology of $c$ in $\R^{n}$ will give the arclength of $c$.
\begin{remark}
    If a subset $U\subseteq X$ is an open set of $\tau_{X}$, then $\Sigma_{U} = \set{U\intersect S\Big|\,S\in\Sigma_{X}}$. In such a case, the subspace Borel $\sigma$-algebra of $U$ is simply the restriction of $\Sigma_{X}$ to $U$. Thus, subsets $S\subseteq X$ with non-empty topological interior in $X$ can have non-zero continuous measure under $\Sigma_{X}$ as is, and using $\Sigma_{S}$ should generally only be necessary for subsets $S\subseteq X$ whose interior in $X$ is empty.
\end{remark}
Going on from defining measures on subspace topologies, integration of a function $f:S\to Y$ on a subset $S\subseteq X$ can be defined relative to the subspace Borel $\sigma$-algebra $\Sigma_{S}$ instead of $\Sigma_{X}$, and all the above definitions of general integration maps and IA integrations then readily apply.

\subsubsection{Integrable chain spaces}
For the domains of integration on a topological space, one may be tempted to just use subsets of the the space in question, but such a construction lacks much of the necessary information. First, besides the sets to integrate on, a notion of the orientation of the sets to integrate over is needed - e.g. when integrating around a curve, is one integrating clockwise, or counterclockwise? For this reason, chain complexes are often used to define integration, but they may not always be sufficiently general. What if the "addition" operation underlying the chosen notions of integration is noncommutative, or even nonassociative? What if not just orientation matters, but also the \textit{order of operations}? Hence, a construction similar to but not the same as chain complexes is made to accommodate such cases, to encode the information not just of orientation, but also of the ordering.\\
\paragraph{Integrable chains}\mbox{}\\
First, I need to consider the collection of subsets integration is carried over:
\begin{definition}\label{integrable collection definition}
    Given a topological space $X$, an \textbf{integrable collection} on $X$ is a collection $B\subseteq\mathcal{P}(X)$ of subsets of $X$, where $\mathcal{P}(X)$ is the power set of $X$, such that:
    \begin{enumerate}
        \item \textbf{Closure under Borel $\sigma$-algebra:} If $S$ is a member of $B$, then all measurable subsets $V\in\Sigma_{S}$ are also members of $B$, where $\Sigma_{S}$ is as defined in definition \ref{subset Borel sigma-algebra}.

        \item \textbf{Closure under finite unions:} If $U$ and $V$ are members of $B$, then $U\union V$ is a member of $B$.
    \end{enumerate}
\end{definition}
\begin{notation}
    Note, integrable collections on a topological space $x$ can be easily generated by taking any collection of $A$ subsets of $X$, taking all of their finite unions, and then taking the subspace Borel $\sigma$-algebra on each of resulting subsets of $X$ relative to $X$. In this setting, $A$ is to be called a \textbf{generating collection} of the integrable collection $B$.
\end{notation}
For example, the collection of all subsets of a topological space $X$ that are locally homeomorphic to the unit interval $(0,1)$, corresponding to the set of all smooth 1D paths through $X$, is a generating collection of an integrable collection on $X$, allowing to define line integrals on $X$.
\begin{definition}\label{integrable basis definition}
    Given a topological space $X$, an integrable collection $B$ on $X$, and a non-empty set $\mathcal{O}$, the corresponding \textbf{integrable basis} is the set $\mathcal{B}=\mathcal{O}\times B$, endowed with a function codomain $(Y,+_{Y})$, a measure codomain $(\overline{M},+_{M})$, an integration codomain $(\overline{G},+_{G})$, and a corresponding notion of IA integration $\int\limits_{S}:\mathcal{F}(S,Y)\times\Xi(S,\overline{M})\to\overline{G}$ with integration element $g:Y\times\overline{M}\to\overline{G}$ on all basic sets $S\in B$.
    \begin{enumerate}
        \item The elements of $B$ are called the \textbf{basic sets} of $\mathcal{B}$.

        \item The elements of $\mathcal{O}$ are called the \textbf{orientations} of $\mathcal{B}$.

        \item The elements of the $\mathcal{B}$ are called the \textbf{oriented sets} of $\mathcal{B}$ and denoted $\iota_{k}S$, where $\iota_{k}\in\mathcal{O}$ and $S\in B$.
    \end{enumerate}
    Each orientation $\iota_{k}\in\mathcal{O}$ must correspond to a function $\iota_{k}:M\to M$ that is continuous in the unital magma topology of $M$, and there must be a unique orientation symbol $e\in\mathcal{O}$ that corresponds to the identity map on $M$ via $\forall m\in M: e(m) := \bb{0}_{M} +_{M} m = m +_{M} \bb{0}_{M} = m$.
\end{definition}
\begin{notation}
    In definition \ref{integrable basis definition}, the specific choice of orientations $\mathcal{O}$, function codomain $(Y,+_{Y})$,\\
    measure codomain $(\overline{M},+_{M})$, integration codomain $(\overline{G},+_{G})$, and IA integration\\
    $\int\limits_{S}:\mathcal{F}(S,Y)\times\Xi(S,\overline{M})\to\overline{G}$ with integration element $g:Y\times\overline{M}\to\overline{G}$ is called the \textbf{choice of calculus} forming $\mathcal{B}$, and denoted by $(\mathcal{O},Y,\overline{M},\overline{G},\int g)$. The integrable basis $\mathcal{B}$ is said to be \textbf{formed} on a topological space $X$ by the integrable collection $B$ and the choice of calculus\\
    $(\mathcal{O},Y,\overline{M},\overline{G},\int g)$.
\end{notation}
Now I can form domains of integration from the integrable basis, in a similar fashion to how chains are formed from simplexes in singular homology:
\begin{definition}\label{B-integrable chain definition}
    Given a topological space $X$ and an integrable basis $\mathcal{B}$ on $X$, a \textbf{$\mathcal{B}$-integrable chain} is a formal magma sum of a finite number of oriented sets:
    \begin{equation}
        c := \sumx{\mathcal{B}}{k=1}{n}\iota_{k}S_{k}
    \end{equation}
    Where each $\iota_{k}S_{k}\in\mathcal{B}$ is an oriented set, with each $\iota_{k}$ an orientation and each $S_{k}$ a basic set. The set $S_{c} := \union\limits_{k=1}^{n}S_{k}$ is called the \textbf{base set} of $c$. The magma operation $+_{\mathcal{B}}$ is defined as a purely formal operation, similar to formal group sums but not satisfying any properties other than being a binary operation on oriented sets.
\end{definition}
Note, definition \ref{integrable collection definition} guarantees that the base set $S_{c}$ of a $\mathcal{B}$-integrable chain $c$ is always a member of the integrable collection $B$.
\begin{notation}\label{chains over the same domain}
    Given an integrable basis $\mathcal{B}$ formed on a topological space $X$ by an integrable collection $B$ and a choice of calculus $(\mathcal{O},Y,\overline{M},\overline{G},\int g)$, two $\mathcal{B}$-integrable chains $c_{1},c_{2}$ are said to be \textbf{over the same domain} and denoted $c_{1}\stackrel{S}{\sim}c_{2}$ if their base sets are the same:
    $$c_{1}\stackrel{S}{\sim}c_{2} \Leftrightarrow S_{c_{1}}=S_{c_{2}}$$
\end{notation}

\paragraph{Measure, orientation and IA integration on integrable chains}\mbox{}\\
\begin{definition}
    Given an integrable basis $\mathcal{B}$ formed on a topological space $X$ by an integrable collection $B$ and a choice of calculus $(\mathcal{O},Y,\overline{M},\overline{G},\int g)$, the \textbf{space of measures} on $\mathcal{B}$ is the bundle defined as follows:
    \begin{equation}
        \Xi(\mathcal{B}) := \set{\br{S,\Xi(S,\overline{M})}\Big| S\in B}
    \end{equation}
    Measures on a specific set $S\in B$ will be denoted either as $\mu\in\Xi(S,\overline{M})$ or $(S,\mu)\in\br{S,\Xi(S,\overline{M})}$, interchangeably.
\end{definition}
\begin{definition}\label{oriented measure}
    Given an integrable basis $\mathcal{B}$ formed on a topological space $X$ by an integrable collection $B$ and a choice of calculus $(\mathcal{O},Y,\overline{M},\overline{G},\int g)$, an oriented set $\iota_{k}S\in\mathcal{B}$, and a measure $\mu\in\Xi(S,\overline{M})$, the corresponding \textbf{oriented measure} on $\iota_{k}S$ is the measure $\iota_{k}\mu\in\Xi(S,\overline{M})$ defined by
    \begin{equation}
        \forall A\in\Sigma_{S}:\iota_{k}\mu(A) := \br{\iota_{k}\circ\mu}(A)
    \end{equation}
\end{definition}
Note that $\iota_{k}\mu(A)$ may be undefined if $A$ is a set of infinite measure with respect to $\mu$, but that doesn't matter in the context of IA-integration against $\mu$, because the integral is computed as a limit of integrals of integrable simple functions, which by definition ensures only sets of finite measure with respect to $\mu$, and thus also finite with respect to $\iota_{k}\mu$, are considered in the first place.
\begin{remark}[A conclusion on the nature of orientation]\label{orientations remark}
    If the measure codomain $(\overline{M},+_{M})$ is such that $(M,+_{M})$ is a topological group, it makes most sense to have 2 orientation symbols: $\mathcal{O}=\set{e,-_{M}}$, corresponding to the identity map on $M$ and the group inverse map on $M$. \textit{This can be seen as the reason why there are only two possible orientations of domains of integration in ordinary exterior calculus.} Instead of being an underlying property of the topological space $X$, the possible orientations of subsets of $X$ can rather be seen as a property of the choice of calculus. Specifically, of the part (signed) measures play in defining the choice of calculus over an integrable collection of $X$.
\end{remark}
The above definitions now allow me to define integration on an integrable chain, of a function on its base set against a measure on its base set:
\begin{definition}\label{integration on chains}
    Given a $\mathcal{B}$-integrable chain $c = \sumx{\mathcal{B}}{k=1}{n}\iota_{k}S_{k}$ of an integrable basis $\mathcal{B}$ formed by an integrable collection $B$ and a choice of calculus $(\mathcal{O},Y,\overline{M},\overline{G},\int g)$ on a topological space $X$, the \textbf{integral} on $c$ of a function $f\in\mathcal{F}\br{S_{c}, Y}$ against a measure $\mu\in\Xi(S_{c},\overline{M})$ is defined as follows:
    \begin{equation}
        \int\limits_{c}g(f,d\mu) := \sumx{G}{k=1}{n}\int\limits_{S_{k}}g(f,\iota_{k}d\mu)
    \end{equation}
    Where each integral $\int\limits_{S_{k}}g(f,\iota_{k}d\mu)$ is carried out with respect to function $f$ restricted to $S_{k}$, and the measure $\iota_{k}\mu$ (see definition \ref{oriented measure}) restricted to $\Sigma_{S_{k}}$.
\end{definition}

\paragraph{Integrable chain spaces}\mbox{}\\
\begin{definition}\label{integration equivalence definition}
    Given an integrable basis $\mathcal{B}$ formed by an integrable collection $B$ and a choice of calculus\\ $(\mathcal{O},Y,\overline{M},\overline{G},\int g)$ on a topological space $X$, two $\mathcal{B}$-integrable chains $c_{1},c_{2}$ are said to be \textbf{integration equivalent} if:
    \begin{enumerate}
        \item The chains $c_{1},c_{2}$ are over the same domain a-la notation \ref{chains over the same domain}: $c_{1}\stackrel{S}{\sim}c_{2}$

        \item Denoting $S = S_{c_{1}} = S_{c_{2}}$, for all functions $f\in\mathcal{F}(S,Y)$, all measures $\mu\in\Xi(\Sigma_{S},\overline{M})$, and any choice of IA-integration:
        $$\int\limits_{c_{1}}g(f,d\mu) = \int\limits_{c_{2}}g(f,d\mu)$$
        Meaning that if one integral is undefined, both are undefined, and if one is defined, both are defined and equal each other.
    \end{enumerate}
\end{definition}
\begin{notation}
    As integration equivalence is an equivalence relation, I denote the integration equivalence class of a chain $c$ as $[c]$. For reasons that'll be clarified by definition \ref{integrable chain space definition} and theorem \ref{the integrable chain space is a unital magma}, the integration equivalence classes are to be called \textbf{$\mathcal{C}$-integrable chains}.
\end{notation}
Since integration equivalence is an equivalence relation, the topological quotient space can be defined:
\begin{definition}\label{integrable chain space definition}
    Given a topological space $X$ and an integrable basis $\mathcal{B}$ formed on $X$ by an integrable collection $B$ and a choice of calculus $(\mathcal{O},Y,\overline{M},\overline{G},\int g)$, let $C$ be the set of all possible chains of $\mathcal{B}$. The corresponding \textbf{integrable chain space} $\mathcal{C}$ is the topological quotient space of $C$ by the integration-equivalence relation.
\end{definition}
\begin{notation}
    Just like the integrable basis $\mathcal{B}$, the integrable chain space $\mathcal{C}$ is said to be formed on a topological space $X$ by an integrable collection $B$ and a choice of calculus $(\mathcal{O},Y,\overline{M},\overline{G},\int g)$.
\end{notation}
\begin{theorem}\label{the integrable chain space is a unital magma}
     An integrable chain space $\mathcal{C}$ formed on a topological space $X$ by an integrable collection $B$ and a choice of calculus $(\mathcal{O},Y,\overline{M},\overline{G},\int g)$ is a unital magma with respect to a magma operation $+_{\mathcal{C}}$ defined via $[c_{1}]+_{\mathcal{C}}[c_{2}] := \sr{c_{1}+_{\mathcal{B}}c_{2}}$, for all $\mathcal{B}$-integrable chains $c_{1},c_{2}$. The identity element of $(\mathcal{C},+_{\mathcal{C}})$ is the empty set $\emptyset$.
\end{theorem}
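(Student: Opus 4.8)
The plan is to verify the unital magma axioms for $(\mathcal{C}, +_{\mathcal{C}})$ directly from the definitions, with the main work being to check that the proposed operation is well-defined on integration-equivalence classes. First I would show that $+_{\mathcal{C}}$ does not depend on the choice of representatives: suppose $c_1 \sim c_1'$ and $c_2 \sim c_2'$ in the sense of Definition \ref{integration equivalence definition}; I must show $c_1 +_{\mathcal{B}} c_2 \sim c_1' +_{\mathcal{B}} c_2'$. Both sides have base set $S_{c_1}\cup S_{c_2} = S_{c_1'}\cup S_{c_2'}$ (since $S_{c_1}=S_{c_1'}$ and $S_{c_2}=S_{c_2'}$), so condition (1) of integration equivalence holds. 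For condition (2), fix any $f\in\mathcal{F}(S,Y)$ and $\mu\in\Xi(\Sigma_S,\overline{M})$ with $S$ the common base set; by Definition \ref{integration on chains}, $\int_{c_1 +_{\mathcal{B}} c_2} g(f,d\mu)$ splits as $\int_{c_1} g(f,d\mu) +_G \int_{c_2} g(f,d\mu)$ (restricting $f,\mu$ to the relevant subsets), and similarly for the primed chains. Here I would note the subtlety that integration equivalence of $c_1$ and $c_1'$ is stated over their common base set $S_{c_1}$, whereas now we integrate over the possibly larger set $S$; one resolves this by observing that restricting $f$ and $\mu$ to $S_{c_1}\subseteq S$ reduces $\int_{c_1}$ over $S$ to $\int_{c_1}$ over $S_{c_1}$, where the equivalence hypothesis applies — and I expect this restriction-compatibility to be the main obstacle requiring care, since one must confirm that the IA-integral of $f|_{S_{c_1}}$ against $\mu|_{\Sigma_{S_{c_1}}}$ on a chain with base set $S_{c_1}$ coincides with the corresponding term in the decomposition of $\int_{c_1+_{\mathcal{B}}c_2}$.

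Once well-definedness is established, the remaining axioms are immediate. Closure holds because $c_1 +_{\mathcal{B}} c_2$ is again a $\mathcal{B}$-integrable chain (a finite formal magma sum of oriented sets), so $[c_1 +_{\mathcal{B}} c_2]\in\mathcal{C}$. For the identity, I would take the empty chain — the formal sum over the empty index set, whose base set is $\emptyset\in B$ (the empty union) — and check that for any chain $c$, the chains $c +_{\mathcal{B}} \emptyset$, $\emptyset +_{\mathcal{B}} c$, and $c$ all have base set $S_c$ and yield identical integrals: by Definition \ref{integration on chains} the contribution of the empty chain is the empty $+_G$-sum, namely $\bb{0}_G$, and $\bb{0}_G$ is the identity of $+_G$, so $\int_{c+_{\mathcal{B}}\emptyset} g(f,d\mu) = \int_c g(f,d\mu) +_G \bb{0}_G = \int_c g(f,d\mu)$, and symmetrically on the left. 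Hence $[c] +_{\mathcal{C}} [\emptyset] = [c] = [\emptyset] +_{\mathcal{C}} [c]$, so $[\emptyset]$ (identified with $\emptyset$) is a two-sided identity.

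I would explicitly not claim associativity or commutativity of $+_{\mathcal{C}}$ — the magma operation $+_{\mathcal{B}}$ is purely formal by Definition \ref{B-integrable chain definition}, so $\mathcal{C}$ is only asserted to be a unital magma, consistent with the statement of Theorem \ref{the integrable chain space is a unital magma}. Finally I should remark that the topological quotient structure plays no role in the algebraic claim; the magma operation is defined purely combinatorially on equivalence classes, and continuity of $+_{\mathcal{C}}$ is not part of this theorem's assertion.
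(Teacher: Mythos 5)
Your proposal is correct and follows essentially the same route as the paper's own proof: establish well-definedness of $+_{\mathcal{C}}$ on integration-equivalence classes by splitting $\int_{c_1+_{\mathcal{B}}c_2}$ into $\int_{c_1}+_G\int_{c_2}$, then verify that the empty chain acts as a two-sided identity because $\mu(\emptyset)=\bb{0}_M$ forces its integral contribution to be $\bb{0}_G$. If anything, you are slightly more careful than the paper in flagging the restriction-compatibility subtlety (passing from functions and measures on the common base set $S$ down to $S_{c_1}$), which the paper's proof uses implicitly without comment.
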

\begin{proof}
    Let $X$ be a topological space, and let $\mathcal{C}$ be an integrable chain space formed on $X$ by an integrable collection $B$ and a choice of calculus $(\mathcal{O},Y,\overline{M},\overline{G},\int g)$. The claims to be proven are:
    \begin{enumerate}
        \item For all $[a],[b]\in\mathcal{C}$, and all $a_{1},a_{2}\in [a],b_{1},b_{2}\in [b]$: $[a_{1}+_{\mathcal{B}}b_{1}] = [a_{2}+_{\mathcal{B}}b_{2}]$.

        \item For all $[a]\in\mathcal{C}$: $[a+_{\mathcal{B}}\emptyset]=[\emptyset +_{\mathcal{B}} a] = [a]$.
    \end{enumerate}
    To prove claim 1, first note that by definition \ref{integration equivalence definition}, $S_{a_{1}}=S_{a_{2}}$ and $S_{b_{1}}=S_{b_{2}}$. As such, it must also be that $S_{a_{1}+_{\mathcal{B}}b_{1}}=S_{a_{2}+_{\mathcal{B}}b_{2}}$. Now denoting $S_{a}=S_{a_{1}}$, $S_{b}=S_{b_{1}}$, and $S_{a+_{\mathcal{B}}b}=S_{a_{2}+_{\mathcal{B}}b_{1}}$, note that again by definition \ref{integration equivalence definition} it must be the case that for all functions $f\in\mathcal{F}(S_{a},Y),h\in\mathcal{F}(S_{b},Y)$ and all measures\\
    $\mu\in\Xi(S_{a},\overline{M}),\nu\in\Xi(S_{b},\overline{M})$, it must be that
    \begin{align*}
        \int\limits_{a_{1}}g(f,d\mu) &= \int\limits_{a_{2}}g(f,d\mu)\\
        \int\limits_{b_{1}}g(h,d\nu) &= \int\limits_{b_{2}}g(h,d\nu)
    \end{align*}
    Thus, for all functions $f\in\mathcal{F}(S_{a+_{\mathcal{B}}b},Y)$ and all measures $\mu\in\Xi(S_{a+_{\mathcal{B}}b},\overline{M})$, it must be the case that
    \begin{equation*}
        \int\limits_{a_{1}}g(f,d\mu) +_{G} \int\limits_{b_{1}}g(f,d\mu) = \int\limits_{a_{2}}g(f,d\mu) +_{G} \int\limits_{b_{2}}g(f,d\mu)
    \end{equation*}
    Taking all of this together, it results that by definitions \ref{integration on chains} and \ref{integration equivalence definition}: $[a_{1}+_{\mathcal{B}}b_{1}]=[a_{2}+_{\mathcal{B}}b_{2}]$, meaning that claim 1 is now proven. To prove claim 2, simply note that by definition \ref{measure definition}, the measure of the empty set $\emptyset$ is always $\bb{0}_{M}$. As such, due to definitions \ref{definition of simple integral} and \ref{IA integral definition}, as well as theorem \ref{theorem: integral of a simple function}, it trivially follows that $\int\limits_{\emptyset}g(f,d\mu)=\bb{0}_{G}$ for any function $f\in\mathcal{F}(\emptyset,Y)$ and any measure $\mu\in\Xi(\emptyset,\overline{M})$. As such, it follows from definitions \ref{integration on chains} and \ref{integration equivalence definition}, and the fact that $\overline{G}$ is an extended topological unital magma satisfyinh $\forall K\in\overline{G}: K +_{G} \bb{0}_{G} = \bb{0}_{G} +_{G} K = K$, that for all $[a]\in\mathcal{C}$: $[a+_{\mathcal{B}}\emptyset]=[\emptyset +_{\mathcal{B}} a] = [a]$, proving claim 2.
\end{proof}
\begin{remark}[Region-additivity of integrals, commutativity, and noncommutative integrals]\label{region additivity remark}
    Note that in the definitions \ref{measure definition}, \ref{definition of simple integral} and \ref{IA integral definition}, measure and integration (both simple and IA) have not been required to be additive over unions of disjoint domains in the topological space $X$ being measured/integrated over. For integration, this means situations of $$\int\limits_{A\union B}g(f,d\mu) \neq \int\limits_{A}g(f,d\mu) +_{G} \int\limits_{B}g(f,d\mu)$$
    with $A,B$ disjoint may be allowed. This was because (a) it turned out that property was unnecessary to define integration, and (b) the union operation of sets is commutative, but the magma operations $+_{M}$ and $+_{G}$ are allowed in this paper to be noncommutative - the operation of set union doesn't carry the necessary information for order of operations needed in the case of $+_{G}$ noncommutativity. Further note, that the requirement of integration being additive over union of integration domains can be framed in terms of integration equivalence: Given an integrable basis $\mathcal{B}$ formed on a topological space $X$ by an integrable collection $\mathcal{B}$ and a choice of calculus $(\mathcal{O},Y,\overline{M},\overline{G},\int g)$, let us say that the integral $\int g$ is \textbf{region additive} over the basic sets of $B$ if for all $\iota_{*}\in\mathcal{O}$, and all $\mathcal{B}$-integrable chains $c=\sumx{\mathcal{B}}{k=1}{n}\iota_{*}S_{k}$ such that all $S_{k}$ are pairwise disjoint, $[c] = [\iota_{*}S_{c}]$. It follows trivially that a necessary condition for region additivity is commutativity of the integration "addition" operation $+_{G}$. This doesn't mean that notions of integration based on noncommutative $+_{G}$ magma operations are ill-defined - it just means that they won't be region additive the way more familiar notions of integration usually are.
\end{remark}
\newpage
\subsection{Integrable chain complexes and disintegration of measures}
\subsubsection{Integrable chain complexes}
Now that integrable chain spaces have been defined, I can define integrable chain complexes in analogy to the traditional definition of chain complexes.
\begin{definition}\label{integrable chain complex definition}
    Given a topological space $X$, an \textbf{integrable chain complex} $\set{\mathcal{C}_{n},\partial_{n}}$ on $X$ is a sequence of integrable chain spaces $\set{\emptyset},\mathcal{C}_{0},\mathcal{C}_{1},...$, with each $\mathcal{C}_{n}$ formed by an integrable collection $B_{n}$ and a choice of calculus $(\mathcal{O}_{n},Y_{n},\overline{M}_{n},\overline{G},\int g_{n})$, together with a family of \textbf{boundary operators}\\ $\partial_{n}:\mathcal{C}_{n}\to\mathcal{C}_{n-1}$ that satisfy:
    \begin{enumerate}
        \item The boundary operators are unital magma homomorphisms:
        $$\forall n\in\N,\forall [c_{1}],[c_{2}]\in\mathcal{C}_{n}:\partial_{n}\br{[c_{1}]+_{\mathcal{C}_{n}}[c_{2}]}=\partial_{n}\br{[c_{1}]} +_{\mathcal{C}_{n-1}} \partial_{n}\br{[c_{2}]}$$

        \item Given two $\mathcal{C}_{n}$-integrable chains defined over the same domain as each other, their boundaries must also defined over the same domain as each other:
        $$\forall n\in\N,\forall [c_{1}],[c_{2}]\in\mathcal{C}_{n}:[c_{1}]\stackrel{S}{\sim}[c_{2}]\Rightarrow \partial_{n}[c_{1}]\stackrel{S}{\sim}\partial_{n}[c_{2}]$$

        \item For all $\mathcal{C}_{n}$-integrable chains $[c]\in\mathcal{C}_{n}$, the base set $S_{\partial_{n}[c]}$ is contained in the closure of the base set $S_{[c]}$ under the topology of $X$:
        $$\forall n\in\N,\forall[c]\in\mathcal{C}_{n}: S_{\partial_{n}[c]}\subseteq \closure_{X}S_{[c]}$$

        \item The boundary of a boundary is empty:
        $$\forall n\in\N,\forall [c]\in\mathcal{C}_{n+1}:\br{\partial_{n}\circ\partial_{n+1}}[c]=\emptyset$$

        \item The boundary of 0-chains is empty:
        $$\forall [c]\in\mathcal{C}_{0}:\partial_{0}[c]=\emptyset$$
    \end{enumerate}
\end{definition}
Important things to note:
\begin{enumerate}
    \item Each choice of calculus $(\mathcal{O}_{n},Y_{n},\overline{M}_{n},\overline{G},\int g_{n})$ in definition \ref{integrable chain complex definition} is allowed to have different $(\mathcal{O}_{n},Y_{n},\overline{M}_{n},\int g_{n})$ for different values of $n$, but all must share the same integration codomain $\overline{G}$.

    \item Given a sequence of integrable chain spaces $\set{\emptyset},\mathcal{C}_{0},\mathcal{C}_{1},...$ as in definition \ref{integrable chain complex definition}, it's not in general guaranteed that a corresponding integrable chain complex $\set{\mathcal{C}_{n},\partial_{n}}$ can be defined - the question of whether or not suitable non-trivial boundary operators exist is crucial. If they don't, it'll be impossible to define differentiation of functions and differential forms in the manner done in this paper.
\end{enumerate}
\begin{notation}\label{k-chain notation}
    Given an integrable chain complex $\set{\mathcal{C}_{n},\partial_{n}}$ on a topological space $X$, for any $k\in\N$, the $\mathcal{C}_{k}$-integrable chains belonging to $\mathcal{C}_{k}$ are called the \textbf{$k$-chains} of $\set{\mathcal{C}_{n},\partial_{n}}$.
\end{notation}
\begin{notation}
    Following the notation used in discussion of traditional chain complexes, all integrable chain complexes are said to be \textbf{bounded below}, since they don't extend to non-empty integrable chain spaces $\mathcal{C}_{-1},\mathcal{C}_{-2},...$ . Following from this, an integrable chain complex is said to be \textbf{bounded above}, and thus fully \textbf{bounded}, if there exists some $N\in\N$ such that for all $n>N$, $\mathcal{C}_{n}=\set{\emptyset}$.
\end{notation}
\begin{notation}\label{reduced integrable chain complex}
    Given a topological space $X$, an integrable chain complex $\set{\mathcal{C}_{n},\partial_{n}}$ is said to be \textbf{reduced} if the integrable collection forming $\mathcal{C}_{0}$ is $B_{0}\subseteq X$. This corresponds to the intuition that 0-chains should represent points of the topological space $X$. Note, in this context the union of points in $X$ is undefined, so $B_{0}$ is vacuously closed under unions. Similarly, the subspace Borel $\sigma$-algebra of a point $x\in X$ is simply $\set{\emptyset,x}$.
\end{notation}
\begin{remark}[Integrable chain complexes define an integer dimension on subsets of a topological space]\label{remark: integrable chain complexes define an integer dimension}
    Note that definition \ref{integrable chain complex definition} and notation \ref{k-chain notation} suggest that integrable chain complexes on a topological space $X$ impose a sort of \textbf{integer dimension} on certain subsets of $X$ - specifically, on the base sets of integrable chains in the complex. Given the topological space $x$, this notion of integer dimension may have different geometric interpretations depending on the choice of integrable chain complex on $X$.
\end{remark}

\subsubsection{Measures and disintegrations on integrable chain complexes}
Now that I have integrable chain complexes on a topological space, I can define one last crucial ingredient needed to define differentiation of functions on integrable chain complexes - the concept of how measures on a $k$-chain relate to measures on its boundary.
\begin{definition}\label{measure disintegrations}
    Given an integrable chain complex $\set{\mathcal{C}_{n},\partial_{n}}$ formed on a topological space $X$ by integrable collections $B_{n}$ and choices of calculus $(\mathcal{O}_{n},Y_{n},\overline{M}_{n},\overline{G},\int g_{n})$, let
    $$\mathcal{U}(\mathcal{B}_{n})=\set{\br{S,\mathcal{U}(S,\overline{M}_{n})}\Big| S\in B_{n},\mathcal{U}(S,\overline{M}_{n})\subseteq \Xi(S,\overline{M}_{n})}$$
    Be a subset of the space of measures on the integrable bases $\mathcal{B}_{n}$, called the \textbf{decomposable measures} on $\mathcal{B}_{n}$. A sequence of \textbf{measure disintegrations} on $\set{\mathcal{C}_{n},\partial_{n}}$ is a sequence of mappings\\
    $\pi_{n}:\mathcal{U}(\mathcal{B}_{n})\to\mathcal{U}(\mathcal{B}_{n-1})$, relating the disintegrable measures on $\mathcal{C}$-integrable chains to corresponding measures on their boundaries:
    $$\forall [c]\in\mathcal{C}_{n},(S_{c},\mu)\in\mathcal{U}(\mathcal{B}_{n}):\pi_{n}\br{S_{[c]},\mu}\in\br{S_{\partial_{n}[c]},\mathcal{U}(S_{\partial_{n}[c]},\overline{M}_{n-1})} $$
    Where $S_{[c]}$ is the base set of $[c]$, and $S_{\partial_{n}[c]}$ is the base set of $\partial_{n}[c]$. Note, condition 2 in definition \ref{integrable chain complex definition} guarantees that $S_{\partial_{n}[c]}$ is well defined. If the $\mathcal{C}$-integrable chain $[c]$ is known from context, then I will also simply denote $\pi_{n}\mu := \pi_{n}(\mu)\in\mathcal{U}(S_{\partial_{n}[c]},\overline{M}_{n-1})$, accordingly.
\end{definition}
\begin{corollary}[The measure disintegration of a measure disintegration is the zero measure]
    Note, due to the fact that by definition \ref{integrable chain complex definition} the boundary of a boundary is empty a-la $$\partial_{n}\circ\partial_{n+1}=\emptyset$$
    It results from definitions \ref{measure definition} and \ref{measure disintegrations} that the measure disintegration of a measure disintegration is the zero measure (unital magma identity element) of the corresponding measure codomain:
    $$\forall [c]\in\mathcal{C}_{n+1},(S_{c},\mu)\in\mathcal{U}(\mathcal{B}_{n+1}):\br{\pi_{n}\circ\pi_{n+1}}\br{S_{[c]},\mu} = \br{\emptyset, \bb{0}_{M-1}} $$
\end{corollary}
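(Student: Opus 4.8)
The plan is a straightforward definition chase, pivoting on the nilpotency relation $\partial_n \circ \partial_{n+1} = \emptyset$ from property 4 of Definition \ref{integrable chain complex definition} together with the normalization clause $\mu(\emptyset) = \bb{0}_M$ built into Definition \ref{measure definition}.

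First I would unpack the composition. Fix $[c] \in \mathcal{C}_{n+1}$ and a decomposable measure $(S_{[c]}, \mu) \in \mathcal{U}(\mathcal{B}_{n+1})$. By Definition \ref{measure disintegrations}, $\pi_{n+1}(S_{[c]}, \mu)$ is an element of $\br{S_{\partial_{n+1}[c]}, \mathcal{U}(S_{\partial_{n+1}[c]}, \overline{M}_n)}$, i.e. a decomposable measure sitting over the base set of the $n$-chain $\partial_{n+1}[c]$; write it as $(S_{\partial_{n+1}[c]}, \nu)$. Applying $\pi_n$ and invoking Definition \ref{measure disintegrations} once more, now for the chain $\partial_{n+1}[c] \in \mathcal{C}_n$, gives a decomposable measure sitting over the base set $S_{\partial_n(\partial_{n+1}[c])}$. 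Property 2 of Definition \ref{integrable chain complex definition} is what guarantees these iterated base sets are well-defined independently of the chosen representatives, so no ambiguity arises.

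Next I would identify that base set. By property 4 of Definition \ref{integrable chain complex definition}, $(\partial_n \circ \partial_{n+1})[c] = \emptyset$, which by Theorem \ref{the integrable chain space is a unital magma} is precisely the identity element of $\mathcal{C}_{n-1}$, namely the class of the empty formal magma sum of oriented sets. Under the convention of Definition \ref{B-integrable chain definition} the base set of an empty sum is the empty union $\emptyset \subseteq X$. Hence $(\pi_n \circ \pi_{n+1})(S_{[c]}, \mu)$ is a measure over the base set $\emptyset$.

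Finally I would note that the subspace Borel $\sigma$-algebra of $\emptyset$ is $\Sigma_\emptyset = \set{\emptyset}$, so $\Xi(\Sigma_\emptyset, \overline{M}_{n-1})$ --- and therefore any collection of decomposable measures over $\emptyset$ --- consists of the single function $\emptyset \mapsto \bb{0}_{M_{n-1}}$, forced by Definition \ref{measure definition}. This gives $(\pi_n \circ \pi_{n+1})(S_{[c]}, \mu) = (\emptyset, \bb{0}_{M_{n-1}})$, as claimed. There is no genuine obstacle in the argument; the only thing that needs care is the bookkeeping of which chain each successive disintegration is attached to, plus the harmless convention that the empty chain has empty base set, after which the conclusion is immediate.
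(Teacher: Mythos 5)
Your proposal is correct and follows essentially the same route as the paper: the nilpotency $\partial_{n}\circ\partial_{n+1}=\emptyset$ forces the twice-disintegrated measure to live over the empty base set, whose subspace Borel $\sigma$-algebra is $\set{\emptyset}$, and Definition \ref{measure definition} then forces the value $\bb{0}_{M_{n-1}}$. Your write-up merely makes explicit the bookkeeping (and the typo-corrected subscript on $\bb{0}$) that the paper leaves as an immediate remark.
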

\begin{notation}\label{measured chain complex}
    An integrable chain complex $\br{\mathcal{C}_{n},\partial_{n}}$ formed on a topological space $X$ by integrable collections $B_{n}$ and choices of calculus $(\mathcal{O}_{n},Y_{n},\overline{M}_{n},\overline{G},\int g_{n})$, the integrable chain complex is said to be \textbf{measured}, and denoted as $\br{\mathcal{C}_{n},\partial_{n},\pi_{n}}$, if a sequence of disintegrations $\pi_{n}:\mathcal{U}(\mathcal{B}_{n})\to\mathcal{U}(\mathcal{B}_{n-1})$ exists on it as per definition \ref{measure disintegrations}.
\end{notation}
\begin{notation}\label{notation: choosing a decomposable measures on a specific base set}
    Given a base set $S_{[c]}$ of a $\mathcal{C}_{n}$-integrable chain $[c]$ in a measured integrable chain complex $\br{\mathcal{C}_{n},\partial_{n}}$, the notation "$\mathcal{U}\br{S_{[c]},\overline{M}_{n+1}}$" denotes specifically the decomposable measures that are defined on $\Sigma_{S_{[c]}}$.
\end{notation}

\section{The Exterior Calculus on Integrable Chain Complexes}\label{The Exterior Calculus on Integrable Chain Complexes}
\subsection{Differential forms on an integrable chain space}
Given an integrable chain space $\mathcal{C}$ formed on a topological space $X$ by an integrable collections $B$ and a choice of calculus $(\mathcal{O},Y,\overline{M},\overline{G},\int g)$, consider the set of all mappings $\alpha:\mathcal{C}\to\overline{G}$, taking in $\mathcal{C}$-integrable chains and returning values in $\overline{G}$. That set may be considered the set of all differential forms on $\mathcal{C}$, and integration of a differential form $\alpha$ on a $\mathcal{C}$-integrable chain $[c]$ may be defined via $\int\limits_{[c]}\alpha := \alpha([c])$. However, that set is a bit too large to deal with, so in this section I will now form a more tractable set of differential forms, whose properties are easier to digest.
\begin{definition}\label{basic differential forms definition}
    Given an integrable chain space $\mathcal{C}$ formed on a topological space $X$ by an integrable collections $B$ and a choice of calculus $(\mathcal{O},Y,\overline{M},\overline{G},\int g)$, a \textbf{basic differential form} is a mapping $\alpha:\mathcal{C}\to\overline{G}$ that satisfies the following conditions:
    \begin{enumerate}
        \item $\forall [c_{1}],[c_{2}]\in\mathcal{C}: \alpha\br{[c_{1}]+_{\mathcal{C}}[c_{2}]} = \alpha([c_{1}]) +_{G} \alpha([c_{2}])$

        \item For all $[c]\in\mathcal{C}$, there exists a function $f\in\mathcal{F}(S_{[c]},Y)$ and a measure $\mu\in\Xi(S_{[c]},\overline{M})$\\
        such that $\alpha([c]) = \int\limits_{[c]}g(f,d\mu)$.\\
        The function $f$ and measure $\mu$ taken together are called a \textbf{representation} of $\alpha$ on $[c]$.

        \item For all $[a],[b]\in\mathcal{C}$ such that $S_{[a]}\subseteq S_{[b]}$, the representation of $\alpha$ on $[a]$ equals the restriction to $S_{[c]}$ of the representation of $\alpha$ on $[b]$. Meaning, if $\alpha$ is represented on $[b]$ via a function $f\in\mathcal{F}(S_{[b]},Y)$ and a measure $\mu\in\Xi(S_{[b]},\overline{M})$, then $\alpha$ is represented on $[a]$ via the restriction of $f$ to $S_{[a]}$, and the restriction of $\mu$ to $\Sigma_{S_{[a]}}$.
    \end{enumerate}
\end{definition}
\begin{definition}\label{sum of differential forms}
    Given an integrable chain space $\mathcal{C}$ formed on a topological space $X$ by an integrable collections $B$ and a choice of calculus $(\mathcal{O},Y,\overline{M},\overline{G},\int g)$, and two basic differential forms $\alpha_{1},\alpha_{2}$ on $\mathcal{C}$, the \textbf{sum} of $\alpha_{1},\alpha_{2}$ is defined as follows:
    $$\forall [c]\in\mathcal{C}:\br{\alpha_{1}+_{\Omega}\alpha_{2}}([c]) := \alpha_{1}([c]) +_{G} \alpha_{2}([c])$$
\end{definition}
\begin{definition}\label{differential form space}
    Given an integrable chain space $\mathcal{C}$ formed on a topological space $X$ by an integrable collections $B$ and a choice of calculus $(\mathcal{O},Y,\overline{M},\overline{G},\int g)$, the \textbf{differential form space} on $\mathcal{C}$ is the set $\Omega(\mathcal{C})$ of all mappings $\omega:\mathcal{C}\to\overline{G}$ that can be formed as a finite sum of basic differential forms:
    $$\exists n\in\N,\forall [c]\in\mathcal{C}: \omega([c]) = \br{\sumx{\Omega}{k=1}{n}\alpha_{k}}([c]) = \sumx{G}{k=1}{n}\alpha_{k}([c])$$
    Where each $\alpha_{k}$ is a basic differential form on $\mathcal{C}$. The elements of $\Omega(\mathcal{C})$ are called the \textbf{differential forms} on $\mathcal{C}$.
\end{definition}
\begin{theorem}
    Given an integrable chain space $\mathcal{C}$ formed on a topological space $X$ by an integrable collections $B$ and a choice of calculus $(\mathcal{O},Y,\overline{M},\overline{G},\int g)$, the differential form space $\Omega(\mathcal{C})$ defined in definition \ref{differential form space} is a unital magma under the sum operation defined in \ref{sum of differential forms}, with an identity element $\bb{0}_{\Omega}$ defined via $\forall [c]\in\mathcal{C}:\bb{0}_{\Omega}([c]) = \bb{0}_{G}$
\end{theorem}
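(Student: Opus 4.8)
The plan is to verify the three defining features of a unital magma, in the (possibly partial, as elsewhere in the paper when infinities are involved) sense used throughout: that $+_{\Omega}$ sends $\Omega(\mathcal{C})\times\Omega(\mathcal{C})$ into $\Omega(\mathcal{C})$ wherever it is defined, that the constant map $\bb{0}_{\Omega}$ belongs to $\Omega(\mathcal{C})$, and that $\bb{0}_{\Omega}$ is a two-sided identity for $+_{\Omega}$.

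For closure, I would fix $\omega_{1},\omega_{2}\in\Omega(\mathcal{C})$ and, using Definition \ref{differential form space}, write $\omega_{1} = \sumx{\Omega}{k=1}{n}\alpha_{k}$ and $\omega_{2} = \sumx{\Omega}{j=1}{m}\beta_{j}$ as finite $+_{\Omega}$-combinations of basic differential forms. Evaluating at an arbitrary $[c]\in\mathcal{C}$ and unfolding Definition \ref{sum of differential forms}, $(\omega_{1}+_{\Omega}\omega_{2})([c]) = \omega_{1}([c]) +_{G} \omega_{2}([c])$ is a finite $+_{G}$-combination of the values $\alpha_{k}([c])$ and $\beta_{j}([c])$; hence $\omega_{1}+_{\Omega}\omega_{2}$ is again a finite sum of basic forms, i.e. an element of $\Omega(\mathcal{C})$. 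The one subtlety is that, with $+_{G}$ possibly nonassociative, the bracketing produced by $\omega_{1}([c]) +_{G} \omega_{2}([c])$ need not match the strict left-to-right convention of the magma-sum notation; I would dispose of this by noting that membership in $\Omega(\mathcal{C})$ only requires the \emph{existence} of some presentation as a finite sum of basic forms, so re-bracketing keeps us inside $\Omega(\mathcal{C})$.

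For $\bb{0}_{\Omega}\in\Omega(\mathcal{C})$, the cleanest route (if $0\in\N$ in the paper's convention) is to observe that the empty sum of basic forms is exactly $\bb{0}_{\Omega}$; otherwise I would verify directly that $\bb{0}_{\Omega}$ is itself a basic differential form, checking the three conditions of Definition \ref{basic differential forms definition}. Additivity is immediate from $\bb{0}_{G} = \bb{0}_{G} +_{G} \bb{0}_{G}$. For the representation condition, on each $[c]$ I would represent $\bb{0}_{\Omega}$ by the zero function $\bb{0}_{Y}\in\mathcal{F}(S_{[c]},Y)$ together with the zero measure $\bb{0}_{M}\in\Xi(S_{[c]},\overline{M})$, relying on $\int\limits_{[c]}g(\bb{0}_{Y},d\mu) = \bb{0}_{G}$ for $\mu$ the zero measure; this traces back to the identity-preservation clause $g(\bb{0}_{Y},m)=\bb{0}_{G}$ of Definition \ref{integration element definition}, carried through the simple integral (Theorem \ref{theorem: integral of a simple function}), the IA integral (Definition \ref{IA integral definition}), and the chain integral (Definition \ref{integration on chains}). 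The restriction-compatibility condition then holds because restricting the zero function and the zero measure to a smaller base set again gives the zero function and the zero measure.

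Finally, for the identity property I would take any $\omega\in\Omega(\mathcal{C})$ and any $[c]\in\mathcal{C}$ and compute, via Definition \ref{sum of differential forms}, $(\bb{0}_{\Omega}+_{\Omega}\omega)([c]) = \bb{0}_{\Omega}([c]) +_{G}\omega([c]) = \bb{0}_{G}+_{G}\omega([c]) = \omega([c])$, and symmetrically $(\omega+_{\Omega}\bb{0}_{\Omega})([c])=\omega([c])$, using that $\bb{0}_{G}$ is a two-sided identity on all of $\overline{G}$ — the extended-topological-unital-magma property already invoked in the proof of Theorem \ref{the integrable chain space is a unital magma}, so the computation stays valid when $\omega([c])$ is an infinity of $G$. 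Since maps into $\overline{G}$ agreeing on every chain are equal, $\bb{0}_{\Omega}+_{\Omega}\omega = \omega+_{\Omega}\bb{0}_{\Omega} = \omega$, and together with closure and $\bb{0}_{\Omega}\in\Omega(\mathcal{C})$ this gives the claim. I expect the only step needing genuine care to be the membership of $\bb{0}_{\Omega}$: one must make sure a legitimate restriction-compatible \emph{representation} in the sense of Definition \ref{basic differential forms definition} has really been exhibited and that the zero function indeed integrates to $\bb{0}_{G}$ on an arbitrary chain; closure and the identity computation are then direct unfoldings of the definitions.
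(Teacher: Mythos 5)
Your proof is correct and follows essentially the same route as the paper's, which simply asserts that the sum of two differential forms is again a finite sum of basic forms and that $\bb{0}_{\Omega}$ is a basic form acting as a two-sided identity; you merely fill in the details the paper leaves implicit, in particular the verification that $\bb{0}_{\Omega}$ satisfies all three clauses of Definition \ref{basic differential forms definition} via the zero function and zero measure. The one subtlety you flag --- that for a nonassociative $+_{G}$ the bracketing of $\omega_{1}([c])+_{G}\omega_{2}([c])$ need not match the left-to-right magma-sum convention of Definition \ref{differential form space} --- is a genuine wrinkle, but the paper's one-line proof silently assumes it away, so your treatment is if anything more careful than the original.
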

The proof is trivial, as the sum of any two differential forms is itself a finite sum of basic differential forms and is thus a differential form as well, and $\bb{0}_{\Omega}$ is a basic differential form easily seen to be the identity element via $\forall [c]\in\mathcal{C},\omega\in\Omega(\mathcal{C}): \br{\omega+_{\Omega}\bb{0}_{\Omega}}([c]) = \omega([c]) +_{G} \bb{0}_{\Omega}([c]) = \omega([c]) +_{G} \bb{0}_{G} = \omega([c])$, and much the same with $\br{\bb{0}_{\Omega}+_{\Omega}\omega}([c]) - \omega([c])$.

\subsection{The exterior derivative and integrable cochain complexes}
Now that I have differential form spaces, I can easily define the exterior derivative by taking Stokes' theorem as an axiom, and form the resulting integrable cochain complex.
\begin{definition}\label{exterior derivative operators definition}
     Given an integrable chain complex $\set{\mathcal{C}_{n},\partial_{n}}$ formed on a topological space $X$ by integrable collections $B_{n}$ and choices of calculus $(\mathcal{O}_{n},Y_{n},\overline{M}_{n},\overline{G},\int g_{n})$, and given the differential form spaces $\Omega_{n}$ on $\set{\mathcal{C}_{n},\partial_{n}}$, the \textbf{explicit exterior derivative operators} are a sequence of operators\\ $d_{n}:B_{n+1}\times\Omega_{n}\to\Omega_{n+1}$ defined as follows for all $n\in\N$:\\
     For all $S\in B_{n+1}$ and all $\omega\in\Omega_{n}$:
        \begin{equation}\label{generalized Stokes theorem as axiom - differential forms}
             \forall [c]\in\mathcal{C}_{n+1}: S_{[c]}\subseteq S\Rightarrow d_{n}\sr{S,\omega}([c]) := \omega\br{\partial_{n+1}[c]}
        \end{equation}
    Where if $S_{[c]}\nsubseteq S$, then $d_{n}[S,\omega]([c])$ can be anything.
\end{definition}
Note, equation \ref{generalized Stokes theorem as axiom - differential forms} is in fact the generalized Stokes theorem, taken here as axiom to define the exterior derivative operators.
\begin{corollary}\label{requirement for exterior-differentiability corollary}
    In the context of definition \ref{exterior derivative operators definition}, the requirement needed for $d_{n}\sr{S,\omega}([c])$ to be defined on a chain $[c]\in\mathcal{C}_{n+1}$ such that $S_{c}\subseteq S$ is that $\omega\br{\partial_{n+1}[c]}$ is a well defined value in $\overline{G}$. 
\end{corollary}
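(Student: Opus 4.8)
The plan is to unwind Definition~\ref{exterior derivative operators definition} and observe that nothing more than that definition is at stake: on a chain $[c]\in\mathcal{C}_{n+1}$ with $S_{[c]}\subseteq S$, the value $d_{n}\sr{S,\omega}([c])$ is \emph{defined to be} $\omega\br{\partial_{n+1}[c]}$ by equation~\ref{generalized Stokes theorem as axiom - differential forms}, so the left-hand side is defined exactly when the right-hand side names a genuine element of $\overline{G}$. Hence the content of the corollary is purely to check that ``$\omega\br{\partial_{n+1}[c]}$ is a well defined value in $\overline{G}$'' is the correct and exhaustive condition.

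First I would note that $\partial_{n+1}[c]$ is a legitimate $\mathcal{C}_{n}$-integrable chain, since $\partial_{n+1}:\mathcal{C}_{n+1}\to\mathcal{C}_{n}$ by Definition~\ref{integrable chain complex definition} and condition~2 there makes its base set $S_{\partial_{n+1}[c]}$ unambiguous; so $\omega\br{\partial_{n+1}[c]}$ is at least a well-posed evaluation. Next I would expand $\omega\in\Omega_{n}$ as a finite sum of basic differential forms, $\omega=\sumx{\Omega}{k=1}{m}\alpha_{k}$, so that by Definitions~\ref{sum of differential forms} and~\ref{differential form space} we have $\omega\br{\partial_{n+1}[c]}=\sumx{G}{k=1}{m}\alpha_{k}\br{\partial_{n+1}[c]}$. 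By condition~2 of Definition~\ref{basic differential forms definition}, each summand $\alpha_{k}\br{\partial_{n+1}[c]}$ equals an IA integral $\int\limits_{\partial_{n+1}[c]}g_{n}(f_{k},d\mu_{k})$ for a representation $(f_{k},\mu_{k})$ of $\alpha_{k}$ on $\partial_{n+1}[c]$, and such an integral may be undefined — or even fundamentally undefined — in the senses introduced just after Definition~\ref{IA integral definition}. Putting these together, $\omega\br{\partial_{n+1}[c]}$ is a genuine element of $\overline{G}$ precisely when each $\alpha_{k}\br{\partial_{n+1}[c]}$ is defined \emph{and} the finite magma sum $\sumx{G}{k=1}{m}\alpha_{k}\br{\partial_{n+1}[c]}$ is itself a valid element of $\overline{G}$ — the last clause being a real restriction, since $+_{G}$ on the extended topological unital magma $\overline{G}$ is only partially defined (two infinities need not be addable). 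This is exactly the assertion that $\omega\br{\partial_{n+1}[c]}$ is well defined in $\overline{G}$, so the corollary follows.

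The only real obstacle is bookkeeping rather than mathematics: one must confirm that the equivalence ``$d_{n}\sr{S,\omega}([c])$ is defined if and only if $\omega\br{\partial_{n+1}[c]}$ is defined'' carries no hidden side conditions — i.e. that Definition~\ref{exterior derivative operators definition} places no further constraint on the $S_{[c]}\subseteq S$ branch beyond equation~\ref{generalized Stokes theorem as axiom - differential forms} — and that the two failure modes identified (an individual IA integral being (fundamentally) undefined, and the partiality of $+_{G}$ in forming the finite sum) genuinely exhaust all the ways $\omega\br{\partial_{n+1}[c]}$ could fail to lie in $\overline{G}$. Since $\omega$ being a differential form already forces each basic summand $\alpha_{k}$ to possess a representation on every chain of $\mathcal{C}_{n}$, no additional existence issue arises, and the statement is established.
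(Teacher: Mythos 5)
Your proposal is correct and matches the paper's (implicit) reasoning: the corollary is immediate from Definition \ref{exterior derivative operators definition}, since $d_{n}\sr{S,\omega}([c])$ is defined to equal $\omega\br{\partial_{n+1}[c]}$ on the $S_{[c]}\subseteq S$ branch, and the paper offers no further argument. Your extra bookkeeping on the possible failure modes (an undefined IA integral in a representation, or the partiality of $+_{G}$ on infinities) is a sound and slightly more explicit elaboration than the paper provides, but it is the same approach.
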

\begin{notation}
    In the context of definition \ref{exterior derivative operators definition} and corollary \ref{requirement for exterior-differentiability corollary}, given a basic set $S\in B_{n+1}$, it is denoted that a differential form $\omega\in\Omega_{n}$ is \textbf{exterior differentiable} on $S$ if for all $[c]\in\mathcal{C}_{n+1}$ such that $S_{c}\subseteq S$, $\omega\br{\partial_{n+1}[c]}$ is defined in $\overline{G}$. If a differential form $\omega\in\Omega_{n}$ is differentiable on all basic sets $S\in B_{n+1}$, then it's \textbf{differentiable everywhere}.
\end{notation}
\begin{corollary}
    For all $S,T\in B_{n+1}$ such that $S\subseteq T$, all differential forms $\omega\in\Omega_{n}$ that are exterior-differentiable on $T$, and all $[c]$ such that $S_{[c]}\subseteq S$:
         $$d_{n}\sr{S,\omega}([c]) = \omega\br{\partial_{n+1}[c]} = d_{n}\sr{T,\omega}([c])$$
\end{corollary}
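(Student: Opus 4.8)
The plan is simply to unwind Definition \ref{exterior derivative operators definition} on each side of the claimed chain of equalities and use transitivity of set inclusion to see that the two defining clauses fire on the same chain $[c]$. First I would fix $S,T\in B_{n+1}$ with $S\subseteq T$, a differential form $\omega\in\Omega_{n}$ that is exterior-differentiable on $T$, and a chain $[c]\in\mathcal{C}_{n+1}$ with $S_{[c]}\subseteq S$. The one structural observation is that $S_{[c]}\subseteq S$ together with $S\subseteq T$ yields $S_{[c]}\subseteq T$, so $[c]$ satisfies the hypothesis of the defining clause of $d_{n}\sr{T,\omega}$ as well as that of $d_{n}\sr{S,\omega}$.

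Next I would record that the middle term $\omega\br{\partial_{n+1}[c]}$ of the displayed equality is a genuine element of $\overline{G}$: this is exactly the meaning of $\omega$ being exterior-differentiable on $T$ (as introduced after Corollary \ref{requirement for exterior-differentiability corollary}), applied to the chain $[c]$, which we have just seen satisfies $S_{[c]}\subseteq T$. In particular $\omega$ is also exterior-differentiable on $S$, so $d_{n}\sr{S,\omega}([c])$ is defined too, and both outer terms of the displayed chain are meaningful.

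Finally I would apply equation \ref{generalized Stokes theorem as axiom - differential forms} twice: once with $S$, giving $d_{n}\sr{S,\omega}([c]) = \omega\br{\partial_{n+1}[c]}$ because $S_{[c]}\subseteq S$; and once with $T$ in place of $S$, giving $d_{n}\sr{T,\omega}([c]) = \omega\br{\partial_{n+1}[c]}$ because $S_{[c]}\subseteq T$. Reading these two identities through the shared value $\omega\br{\partial_{n+1}[c]}$ produces the claim. I do not expect a genuine obstacle here; the only point requiring the slightest care is that the ``$d_{n}\sr{S,\omega}([c])$ can be anything'' escape clause of Definition \ref{exterior derivative operators definition} never activates, precisely because both inclusions $S_{[c]}\subseteq S$ and $S_{[c]}\subseteq T$ hold, so the value of $d_{n}$ is pinned down by equation \ref{generalized Stokes theorem as axiom - differential forms} in both cases.
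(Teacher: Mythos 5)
Your proposal is correct and matches the paper's intent exactly: the corollary is stated without proof because, as you observe, it follows immediately from Definition \ref{exterior derivative operators definition} once one notes that $S_{[c]}\subseteq S\subseteq T$ makes the defining clause fire for both $d_{n}\sr{S,\omega}$ and $d_{n}\sr{T,\omega}$, each yielding $\omega\br{\partial_{n+1}[c]}$. Your additional care about exterior-differentiability on $T$ guaranteeing the middle term is a well-defined element of $\overline{G}$ is a sensible (if implicit in the paper) touch.
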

\begin{definition}
    Given an integrable chain complex $\set{\mathcal{C}_{n},\partial_{n}}$ formed on a topological space $X$ by integrable collections $B_{n}$ and choices of calculus $(\mathcal{O}_{n},Y_{n},\overline{M}_{n},\overline{G},\int g_{n})$, given the differential form spaces $\Omega_{n}$ on $\set{\mathcal{C}_{n},\partial_{n}}$, and given a sequence of explicit exterior derivative operators $d_{n}:B_{n+1}\times\Omega_{n}\to\Omega_{n+1}$, the \textbf{exterior derivative operators} are a sequence of operators $d_{n}:\Omega_{n}\to\Omega_{n+1}$ defined as follows:
    $$\forall\omega\in\Omega_{n},[c]\in\mathcal{C}_{n+1}: d_{n}\omega([c]) := d_{n}\sr{S_{[c]},\omega}([c])$$
\end{definition}
\begin{corollary}(The exterior derivative of an exterior derivative is trivial)
    Given an integrable chain complex $\set{\mathcal{C}_{n},\partial_{n}}$ formed on a topological space $X$ by integrable collections $B_{n}$ and choices of calculus $(\mathcal{O}_{n},Y_{n},\overline{M}_{n},\overline{G},\int g_{n})$, given the differential form spaces $\Omega_{n}$ on $\set{\mathcal{C}_{n},\partial_{n}}$, and given a sequence of exterior derivative operators $d_{n}:\Omega_{n}\to\Omega_{n+1}$, the exterior derivative of an exterior derivative is trivial:
    \begin{align*}
        \forall [c]\in\mathcal{C}_{n+2},\omega\in\Omega_{n}:\br{d_{n+1}\circ d_{n}}\omega([c]) &= d_{n}\omega\br{\partial_{n+2}[c]}\\
        &= \omega\br{\br{\partial_{n+1}\circ\partial_{n+2}}[c]}\\
        &\stackrel{*}{=} \omega(\emptyset)\\
        &\stackrel{**}{=} \bb{0}_{G}
    \end{align*}
    Where in (*) I used definition \ref{integrable chain complex definition}, and in (**) I used definitions \ref{measure definition},\ref{basic differential forms definition}, and \ref{differential form space}.
\end{corollary}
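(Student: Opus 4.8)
The plan is to establish the displayed chain of equalities by unfolding the definition of the composite $d_{n+1}\circ d_{n}$ one layer at a time, then invoking the chain-complex axiom $\partial_{n+1}\circ\partial_{n+2}=\emptyset$ from Definition \ref{integrable chain complex definition}, and finally checking that every differential form vanishes on the empty chain. I would fix $[c]\in\mathcal{C}_{n+2}$ and $\omega\in\Omega_{n}$ (tacitly assuming, as the hypothesis "a sequence of exterior derivative operators $d_{n}$" already grants, that $d_{n}\omega$ is exterior differentiable on $S_{[c]}$ so that $d_{n+1}$ may be applied there). Using the definition of the non-explicit exterior derivative, $(d_{n+1}\circ d_{n})\omega([c]) = d_{n+1}\br{S_{[c]},d_{n}\omega}([c])$, and since $S_{[c]}\subseteq S_{[c]}$ trivially, the Stokes-as-axiom equation \eqref{generalized Stokes theorem as axiom - differential forms} yields $d_{n+1}\br{S_{[c]},d_{n}\omega}([c]) = (d_{n}\omega)\br{\partial_{n+2}[c]}$, which is the first equality in the statement.

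Next I would repeat the same unfolding one level down. Writing $[c']:=\partial_{n+2}[c]\in\mathcal{C}_{n+1}$, the definition of $d_{n}$ gives $(d_{n}\omega)([c']) = d_{n}\br{S_{[c']},\omega}([c'])$, and again because $S_{[c']}\subseteq S_{[c']}$ the axiom gives $d_{n}\br{S_{[c']},\omega}([c']) = \omega\br{\partial_{n+1}[c']} = \omega\br{\br{\partial_{n+1}\circ\partial_{n+2}}[c]}$, matching the second equality. Then item 4 of Definition \ref{integrable chain complex definition}, the fact that the boundary of a boundary is $\emptyset$, supplies step $(*)$ and reduces the expression to $\omega(\emptyset)$. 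To finish, step $(**)$ requires showing $\omega(\emptyset)=\bb{0}_{G}$: by Definition \ref{differential form space}, $\omega=\sumx{\Omega}{k=1}{m}\alpha_{k}$ is a finite magma sum of basic differential forms, and sums of forms are evaluated pointwise in $\overline{G}$, so it suffices to treat a single basic form $\alpha$; by Definition \ref{basic differential forms definition}, $\alpha(\emptyset)=\int\limits_{\emptyset}g(f,d\mu)$ for some representation $(f,\mu)$, and since $\mu(\emptyset)=\bb{0}_{M}$ by Definition \ref{measure definition}, Theorem \ref{theorem: integral of a simple function} (equivalently the computation already carried out in the proof of Theorem \ref{the integrable chain space is a unital magma}) gives $\int\limits_{\emptyset}g(f,d\mu)=\bb{0}_{G}$. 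Hence $\omega(\emptyset)=\sumx{G}{k=1}{m}\bb{0}_{G}=\bb{0}_{G}$, closing the chain.

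The computation is essentially a two-step telescoping, so there is no deep obstacle; the only real care needed is bookkeeping the interplay between the explicit exterior derivative $d_{n}:B_{n+1}\times\Omega_{n}\to\Omega_{n+1}$ and the induced operator $d_{n}:\Omega_{n}\to\Omega_{n+1}$, together with the (implicit) well-definedness of the composite $d_{n+1}\circ d_{n}$, namely that $d_{n}\omega$ is exterior differentiable on $S_{[c]}$. I expect this last point to be assumed rather than proved, consistent with the corollary's phrasing, which takes a sequence of exterior derivative operators as given; everything else is a direct substitution into the definitions already established.
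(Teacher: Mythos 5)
Your proposal is correct and follows essentially the same route as the paper: unfold the exterior derivative twice via the Stokes-as-axiom definition, invoke $\partial_{n+1}\circ\partial_{n+2}=\emptyset$ from Definition \ref{integrable chain complex definition}, and reduce $\omega(\emptyset)$ to $\bb{0}_{G}$ through the basic-form decomposition and $\mu(\emptyset)=\bb{0}_{M}$. Your explicit bookkeeping of the passage between $d_{n}[S,\omega]$ and the induced $d_{n}:\Omega_{n}\to\Omega_{n+1}$, and of the well-definedness of the composite, is slightly more careful than the paper's own annotation but adds nothing substantively different.
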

\begin{definition}
    Given an integrable chain complex $\set{\mathcal{C}_{n},\partial_{n}}$ formed on a topological space $X$ by integrable collections $B_{n}$ and choices of calculus $(\mathcal{O}_{n},Y_{n},\overline{M}_{n},\overline{G},\int g_{n})$, the collection $\br{\Omega_{n},d_{n}}$ of differential form spaces taken together with the exterior derivative operators is said to be the \textbf{integrable cochain complex} formed on $X$ by integrable collections $B_{n}$\\
    and choices of calculus $(\mathcal{O}_{n},Y_{n},\overline{M}_{n},\overline{G},\int g_{n})$.
\end{definition}

\section{The Function Calculus on Integrable Chain Complexes}\label{The Function Calculus on Integrable Chain Complexes}
To consider differentiation of functions defined on integrable chain complexes, I can now use the exterior derivatives of basic differential forms, together with the concept of a measured integrable chain complex. While usually differentiation of functions is first defined, and used to define disintegration of measures as is the purview of disintegration theorems a-la Pachl\cite{pachl1978disintegration}, my approach will be going the other way around; Using measure disintegrations to define the differentiation of functions. First, however, some notation I'll need to move forward:
\begin{notation}
    An integrable chain complex $\set{\mathcal{C}_{n},\partial_{n}}$ formed on a topological space $X$ by integrable collections $B_{n}$ and choices of calculus $(\mathcal{O}_{n},Y_{n},\overline{M}_{n},\overline{G},\int g_{n})$ is said to be \textbf{f-monovalued} if $Y_{n}=Y$ are the same function codomain for all $n\in\N$.
\end{notation}
Now, to define the derivatives of functions:
\begin{definition}\label{derivative of functions definition}
    Given a measured f-monovalued integrable chain complex $\set{\mathcal{C}_{n},\partial_{n},\pi_{n}}$ formed on a topological space $X$ by integrable collections $B_{n}$ and choices of calculus $(\mathcal{O}_{n},Y,\overline{M}_{n},\overline{G},\int g_{n})$, and given the closure $\overline{S} := \closure_{X}S$ of a basic set $S\in B_{n+1}$, a function $D_{n}f\in\mathcal{F}\br{\overline{S},Y}$ is said to be a \textbf{$D_{n}$-derivative} of a function $f\in\mathcal{F}\br{\overline{S},Y}$ if the following relation holds:
    \begin{equation}\label{generalized Stokes theorem as axiom - functions}
        \forall [c]\in C_{n+1}\br{\overline{S}},\mu\in\mathcal{U}\br{S_{[c]},\overline{M}_{n+1}}: \int\limits_{[c]}g_{n+1}\br{D_{n}f,\mu} = \int\limits_{\partial_{n+1}[c]}g_{n}\br{f,d\pi_{n+1}\mu}
    \end{equation}
    Where $C_{n+1}\br{\overline{S}}$ is the set of all $\mathcal{C}_{n+1}$-integrable chains that satisfy $S_{[c]}\subseteq S$,\\
    $\mathcal{U}\br{S_{[c]},\overline{M}_{n+1}}$ is the set of decomposable measures on $S_{[c]}$ as denoted in \ref{notation: choosing a decomposable measures on a specific base set},\\
    the integral $\int\limits_{[c]}g_{n+1}\br{D_{n}f,\mu}$ is done with $D_{n}f$ restricted to $S_{[c]}$,\\
    and the integral $\int\limits_{\partial_{n+1}[c]}g_{n}\br{f,\pi_{n+1}\mu}$ is done with $f$ restricted to $S_{\partial_{n+1}[c]}$.
\end{definition}
Note, just like eq. \ref{generalized Stokes theorem as axiom - differential forms} in definition \ref{exterior derivative operators definition}, eq. \ref{generalized Stokes theorem as axiom - functions} in definition \ref{derivative of functions definition} is once again the generalized Stokes theorem taken as axiom, this time to define the derivative of a function.
\begin{definition}
    Given a measured f-monovalued integrable chain complex $\set{\mathcal{C}_{n},\partial_{n},\pi_{n}}$ formed on a topological space $X$ by integrable collections $B_{n}$ and choices of calculus $(\mathcal{O}_{n},Y,\overline{M}_{n},\overline{G},\int g_{n})$,\\
    let $\mathcal{S}_{n} := \set{\closure_{X}S\Big| S\in B_{n}}$ be the set of $X$-closures of the basic sets of $B_{n}$.\\
    A family of \textbf{derivative operators} $$D_{n}:\br{\union\limits_{S\in\mathcal{S}_{n}}\mathcal{F}\br{S,Y}}\to\br{\union\limits_{S\in\mathcal{S}_{n+1}}\mathcal{F}\br{S,Y}}$$
    is defined on $\set{\mathcal{C}_{n},\partial_{n},\pi_{n}}$ as follows:\\
    For all $S\in\mathcal{S}_{n}$, and all $f\in\mathcal{F}\br{\overline{S},Y}$, if $D_{n}f$ is defined, it is a $D_{n}$-derivative of $f$ on $S$ as per definition \ref{derivative of functions definition}.
\end{definition}
\begin{notation}[The ply and order of derivatives - differentiability classes]
     In this context, a function $f\in\mathcal{F}\br{\overline{S},Y}$ for which $D_{n}f$ is defined is to be called a\\
     \textbf{$D_{n}$-differentiable} function on $\overline{S}$. For each $D_{n}$, the integer subscript $n$ is to be called the \textbf{ply} of the derivative. This is to distinguish the ply from the unrelated notion of the \textbf{order} of a derivative, which is to refer to the amount of times $D_{n}$ is applied to a function $f\in\mathcal{F}\br{\overline{S},Y}$.\\
     For example, $D_{n}^{2}f := \br{D_{n}\circ D_{n}}f$ is the \textbf{second order $D_{n}$-derivative} of $f$. \textbf{$D_{n}$-Differentiability classes} can then be defined, as sets of functions that have $k$th order $D_{n}$-derivatives.
\end{notation}
\begin{corollary}[Connection to exterior calculus]\label{Connection to exterior calculus}
     Given a measured f-monovalued integrable chain complex $\set{\mathcal{C}_{n},\partial_{n},\pi_{n}}$ formed on a topological space $X$ by integrable collections $B_{n}$ and choices of calculus $(\mathcal{O}_{n},Y,\overline{M}_{n},\overline{G},\int g_{n})$, and given a function $f\in\mathcal{F}\br{\overline{S},Y}$ with $\overline{S}\in\mathcal{S}_{n+1}$, consider any differential form $\alpha\in\Omega_{n}$ that satisfies
     $$\forall [c]\in\mathcal{C}_{n+1}: S_{[c]}\subseteq S \Rightarrow \alpha\br{\partial_{n+1}[c]} = \int\limits_{\partial_{n+1}[c]}g_{n}\br{f,d\pi_{n+1}\mu}$$
     For some $\mu\in\mathcal{U}\br{S_{[c]},\overline{M}_{n+1}}$ where the integral $\int\limits_{\partial_{n+1}[c]}g_{n}\br{f,d\pi_{n+1}\mu}$\\
     is done with $f$ restricted to $\partial_{n+1}[c]$.\\
     Now consider the basic differential form $\alpha'\in\Omega_{n+1}$ that satisfies
     $$\forall [c]\in\mathcal{C}_{n+1}: S_{[c]}\subseteq S \Rightarrow \alpha'([c]) = \int\limits_{[c]}g_{n+1}\br{D_{n}f,\mu}$$
     Where the integral $\int\limits_{[c]}g_{n+1}\br{D_{n}f,\mu}$ is done with $f$ restricted to $S_{[c]}$.\\
     Clearly, $\alpha' = d_{n}\sr{\overline{S},\alpha}$ is the exterior derivative of $\alpha$ on $\overline{S}$.
\end{corollary}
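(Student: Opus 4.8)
The plan is to obtain the identity $\alpha' = d_{n}\sr{\overline{S},\alpha}$ by chaining four equalities: the ``Stokes as axiom'' relation for differential forms (eq.~\ref{generalized Stokes theorem as axiom - differential forms} of Definition~\ref{exterior derivative operators definition}), the defining relation imposed on $\alpha$ in the statement, the ``Stokes as axiom'' relation for functions that defines the $D_{n}$-derivative (eq.~\ref{generalized Stokes theorem as axiom - functions} of Definition~\ref{derivative of functions definition}), and the defining relation imposed on $\alpha'$. Concretely, I would fix a chain $[c]\in\mathcal{C}_{n+1}$ with $S_{[c]}\subseteq S$ and let $\mu\in\mathcal{U}\br{S_{[c]},\overline{M}_{n+1}}$ be the decomposable measure the statement attaches to $[c]$ (the same $\mu$ occurring in the relation for $\alpha'$, and $D_{n}f$ is presupposed defined on $\overline{S}$). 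Since $S_{[c]}\subseteq S\subseteq\overline{S}=\closure_{X}S$, eq.~\ref{generalized Stokes theorem as axiom - differential forms} applies with first argument $\overline{S}$ and yields $d_{n}\sr{\overline{S},\alpha}([c]) = \alpha\br{\partial_{n+1}[c]}$.

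Next I would substitute, in order: the hypothesis on $\alpha$, which rewrites $\alpha\br{\partial_{n+1}[c]}$ as $\int\limits_{\partial_{n+1}[c]}g_{n}\br{f,d\pi_{n+1}\mu}$; the defining relation of the $D_{n}$-derivative, eq.~\ref{generalized Stokes theorem as axiom - functions}, which equates $\int\limits_{\partial_{n+1}[c]}g_{n}\br{f,d\pi_{n+1}\mu}$ with $\int\limits_{[c]}g_{n+1}\br{D_{n}f,\mu}$ (under the stated restrictions of $f$ and $D_{n}f$); and the hypothesis on $\alpha'$, which identifies $\int\limits_{[c]}g_{n+1}\br{D_{n}f,\mu}$ with $\alpha'([c])$. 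Reading these left to right gives $d_{n}\sr{\overline{S},\alpha}([c]) = \alpha'([c])$ for every $[c]$ with $S_{[c]}\subseteq S$, with the usual convention that one side is defined exactly when the other is; this is the equality claimed by the corollary on precisely the chains relevant to exterior differentiability on $\overline{S}$. That $\alpha'$ is a bona fide basic differential form is part of the hypotheses of the corollary, so no further check against Definition~\ref{basic differential forms definition} is needed here.

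The only real obstacle is the bookkeeping of the measure $\mu$ and of the base sets, and it is light. One must note explicitly that the $\mu$ produced by the defining relation for $\alpha$ at a given $[c]$ is the very $\mu$ fed to the $D_{n}$-derivative axiom and to the defining relation for $\alpha'$; the statement builds this in by using ``the same $\mu$'' throughout, but it is worth saying because $\alpha\br{\partial_{n+1}[c]}$ alone does not pin down $\mu$ (distinct decomposable measures may disintegrate to yield the same boundary integral). A second, purely formal point: the explicit exterior derivative operator of Definition~\ref{exterior derivative operators definition} takes its first argument from $B_{n+1}$, whereas $\overline{S}=\closure_{X}S$ need not lie in $B_{n+1}$ --- but this does not bite, since for chains with $S_{[c]}\subseteq S$ the value is already determined by $S\in B_{n+1}$ through eq.~\ref{generalized Stokes theorem as axiom - differential forms}, and when $\overline{S}$ does belong to $B_{n+1}$ the two choices agree on such chains by the compatibility corollary ($d_{n}\sr{S,\omega}([c])=d_{n}\sr{T,\omega}([c])$ whenever $S\subseteq T$). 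Everything else is the four-step substitution above.
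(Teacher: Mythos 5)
Your four-step substitution --- Definition~\ref{exterior derivative operators definition} giving $d_{n}\sr{\overline{S},\alpha}([c]) = \alpha\br{\partial_{n+1}[c]}$, then the hypothesis on $\alpha$, then eq.~\ref{generalized Stokes theorem as axiom - functions}, then the hypothesis on $\alpha'$ --- is exactly the chain of definitions the paper relies on when it asserts the conclusion as ``clear,'' so your proposal is correct and takes essentially the same approach. Your two bookkeeping remarks (that the same $\mu$ must thread through all three relations, and that $\overline{S}=\closure_{X}S$ need not itself lie in $B_{n+1}$) are sound and in fact make explicit two points the paper glosses over.
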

From here on, given a measured f-monovalued integrable chain complex $\set{\mathcal{C}_{n},\partial_{n},\pi_{n}}$ formed on a topological space $X$ by integrable collections $B_{n}$ and choices of calculus $(\mathcal{O}_{n},Y,\overline{M}_{n},\overline{G},\int g_{n})$, further notions of function calculus can be easily defined for the derivative operators $D_{n}$ of different ply $n$, each defined on functions $f\in\mathcal{F}\br{\overline{S}_{n},Y}$, where $\overline{S}_{n}\in\mathcal{S}_{n+1}$.
\begin{remark}[$D_{n}$-differentiation as a generalization of $n$-variables differentiation]\label{Dn-differentiation as a generalization $n$-variables differentiation}
    Note, in the standard exterior calculus, 1-chains correspond to 1D curves, making the $D_{0}$ operator equivalent to single-variable differentiation. 2-chains correspond to 2D surfaces, making $D_{1}$ a 2-variables differential operator. Going on with this logic, it results that:\\
    \textit{The above function calculus defines a differential operator $D_{n}$, where the ply $n$ serves as a generalization to $(n+1)$-dimensional domains of functions.} This conlusion can be seen as a direct result of remark \ref{remark: integrable chain complexes define an integer dimension}.
\end{remark}

\section{Conclusions}
This paper has done quite a lot, so allow me to summarize:
\begin{conclusion}
    Section \ref{A Topological Construction of Integration} defined a highly abstract notion of integration, of functions taking value in general topological unital magmas against measures taking value in general extended (compactified) topological unital magmas, on the Borel $\sigma$-algebra of a general topological space. The Riemann integral, geometric product integral, and Lebesgue integral have all been shown as special cases of the IA integration defined in this paper.
\end{conclusion}
\begin{conclusion}
    Section \ref{A Topological Construction of Integrable Chain Complexes} defined notions reminiscent of the traditional definition of chains in a chain complex, specifically suited to integration that may be noncommutative and nonassociative. Integration on these "integrable chains" was defined, and it's been shown that the collection of such integrable chains, aka the integrable chain space, is endowed with a unital magma structure by the choice of calculus on a topological space. Furthermore, \textbf{integrable chain complexes} and further constructions on them were also defined, to accommodate the exterior calculus and function calculus defined in sections \ref{The Exterior Calculus on Integrable Chain Complexes} and \ref{The Function Calculus on Integrable Chain Complexes} after.
\end{conclusion}
\begin{conclusion}
    Also of note in section \ref{A Topological Construction of Integrable Chain Complexes} are remark \ref{orientations remark} on a conclusion as to the nature of orientations of sets in a chain, remark \ref{region additivity remark} on a certain property of noncommutative integrals, and remark \ref{remark: integrable chain complexes define an integer dimension} as to how integrable chain complexes define an integer dimension on subsets of a topological space.
\end{conclusion}
\begin{conclusion}
    Section \ref{The Exterior Calculus on Integrable Chain Complexes} defined notions of differential forms, exterior derivatives, and integrable cochain complexes for this general setting of integration based in unital magma operations, generalizing standard theory on the topic.
\end{conclusion}
\begin{conclusion}
    Section \ref{The Function Calculus on Integrable Chain Complexes} has done the same for function calculus, defining the derivatives of functions defined on certain closed subsets of a topological space. In particular, a differential operator has been found reminiscent to the notion of derivatives of functions with $n$ variables, see remark \ref{Dn-differentiation as a generalization $n$-variables differentiation}.
\end{conclusion}
In doing all of the above, this paper is hoping to generalize standard theories of integration, function calculus, and exterior calculus, in a fairly general manner that as far as the author is aware, has yet to be explored in literature. It is not a complete theory, as questions of existence, uniqueness, and well-definiteness have not been explored due to time constraints. It is, however, the skeleton of one, and can still be used to derive some fairly general conclusions, as well as a helpful start to defining specific novel forms of calculus in practical applications.

\section{Further work}\label{further work}
There are several options for further work on this topic:
\begin{enumerate}
    \item Fleshing out the theory outlined in this paper:
    \begin{itemize}
        \item What are the conditions needed to for an integrable chain complex to be well defined on a topological space, given the integrable collections and choices of calculus on it? What are the conditions for boundary operators defined in section \ref{A Topological Construction of Integrable Chain Complexes} to exist, in particular?

        \item Given a measured f-monovalued integrable chain complex, what are the conditions needed for existence and uniqueness of the derivatives of functions defined in section \ref{The Function Calculus on Integrable Chain Complexes}? 
    \end{itemize}

    \item Developing the general theory further:
    \begin{itemize}
        \item Can more structures of exterior calculus be derived in this general setting, or are additional assumptions needed for it?

        \item What does the chain rule look like in this setting? Can structures such as the codifferential, Laplacian, Hodge star, etc, be defined? Can notions of orthogonality and other key structures be described in this general setting, perhaps through some generalization of integral inner products?
    \end{itemize}
    
    \item Since differentiable functions and differentiability classes have been successfully defined in this paper, can diffeomorphisms be? If yes, can this be used to generalize notions of differentiable manifolds to more novel notions of local calculus?

    \item Duality between functions and measures - if the same basic differential form can be represented by different function-measure pairs, can that be used to find useful dualities between functions and measures? Can the choices of calculus be varied, keeping only the integration codomain fixed, to transform between different kinds of calculus? For example, can a duality between a calculus of rough functions on smooth domains and a calculus of smooth functions on rough domains be reached?

    \item Attempting to adapt J. Harrison\cite{harrison2015operator}'s ideas of taking limits of chains to the setting of this theory -  Harrison successfully used limits of chains\footnote{Recall, both my definition of integrable chains and the standard definition of chains in a chain complex require them to be made of a finite sum of oriented sets - J. Harrison\cite{harrison2015operator} found that further structures of exterior calculus can be derived, under certain conditions, by taking the limit of these sums from finite to infinite.} to derive in a unified setting the exterior calculi of many topological spaces embedded in $\R^{n}$, including highly singular and nonsmooth spaces including discrete calculus and calculus on fractals. Can a wider notion be reached via limits of integrable chains, of induced exterior calculi on topological spaces embedded in a topological space some form of calculus was defined on via the theory in this paper? 

    \item Specific applications, defining integration and exterior calculus on novel topological spaces, defining novel forms of differential and exterior calculus in general, and more. As the references of this paper imply, novel forms of calculus are aplenty, and it is my belief that the theory developed here may be applicable in some capacity to many of them, as well as to generalizing and gaining new perspectives on quite a few existing theories built on top of differential calculus.
\end{enumerate}

\printbibliography

\end{document}